\documentclass[a4paper,12pt]{article}
\usepackage[utf8x]{inputenc}
\usepackage{amsfonts,amsthm,amssymb,mathrsfs,amsmath}
\usepackage{graphicx,graphics}
\usepackage{relsize}
\usepackage[all]{xy}
\usepackage{tikz}
\usepackage{color}
\textwidth=400pt
\usepackage{hyperref}

\newcommand{\ZZ}{\mathbb{Z}}
\newcommand{\NN}{\mathbb{N}}
\newcommand{\CC}{\mathbb{C}}
\newcommand{\RR}{\mathbb{R}}

\newcommand{\Ex}{\mathop{\mathbb{E}}}
\newcommand{\dprod}[2]{\left\langle #1,#2\right\rangle}
\newcommand{\norm}[1]{\left\lVert #1\right\rVert}
\newcommand{\proj}[1]{\left|#1\right\rangle\left\langle#1\right|}

\newtheorem{thm}{Theorem}[section]

\newtheorem{lem}[thm]{Lemma}

\numberwithin{equation}{section}

\DeclareMathOperator{\dist}{d}
\DeclareMathOperator{\tr}{Tr}


\title{Eigenvalue Statistics for higher rank Anderson model over Canopy tree}
 \author{Narayanan P.A. \\ The Institute of Mathematical Sciences \\ Taramani, Chennai 600113, India \\ email: panarayanan@imsc.res.in}
\date{\today}

\begin{document}

\maketitle
\begin{abstract}
This work is focused on the local eigenvalue statistics for the Anderson tight
binding model with non-rank-one perturbations over the canopy tree, at
large disorder.
On the Hilbert space $\ell^2(\mathcal{C})$, where $ \mathcal{C}  $ is
the canopy tree, the 
random operator we consider is 
$\Delta_{\mathcal{C}}+\sum_{y\in J}\omega_y P_y$, where
$\Delta_{\mathcal{C}}$ is the adjacency operator over the tree,
$\{\omega_y\}_{y\in J}$ are i.i.d real random variables following
some absolutely continuous distribution having a bounded density with compact support, and $P_y$ are
projections on $\ell^2(\{x\in\mathcal{C}: \dist(y,x)<m_0~\&~y\prec
x\})$. For this operator, we show that, the eigenvalue-counting point process converges to compound Poisson process.
\end{abstract}
\tableofcontents
\section{Introduction}
In the theory of disordered systems, Anderson tight binding model is well studied for its spectral and dynamical properties. 
The spectral theory for the Anderson tight binding Hamiltonian over Bethe lattice has a rich structure and is one of the models where the existence of both the absolute continuous~\cite{ASW,FHS,KA1} as well as the pure point~\cite{AM,ASFH,FMSS} spectrum are proven.
Naturally, the next question is about the local structure of the spectrum, and so the eigenvalue statistics is an important question to study. 
But, the eigenvalue statistics as defined by Minami~\cite{M2} does not provide the eigenvalue statistics over the Bethe lattice, but over the canopy tree (as explained by Aizenman-Warzel~\cite{AW}). 
The main focus of this manuscript is to study the local eigenvalue
statistics for the Anderson tight binding model over the canopy tree when single site potential affects a collection of vertices of the tree.
Although, to define the point process we look at the cut-off operator on the Bethe lattice.

\noindent
\begin{minipage}{0.6\textwidth}
~\\
To describe our main result we need to set up a few notations first. Let $\mathcal{B}=(V_{\mathcal{B}},E_{\mathcal{B}})$ denote the infinite rooted tree with the root $0\in V_{\mathcal{B}}$; all the vertices has $K+1$ neighbours (in the figure, $K$ is $2$). 
On the Hilbert space $\ell^2(\mathcal{B})$, we have the graph Laplacian $\Delta$ defined by
$$(\Delta\psi)(x)=\sum_{\dist(x,y)=1}\psi(y),~~\forall x\in
V_{\mathcal{B}},\psi\in\ell^2(\mathcal{B}).$$
Here, $ \dist(x,y)  $ is the usual distance on the graphs,
which is the length (i.e., the number of edges) of the shortest path
between the vertices $ x $ and $ y $.
The higher ranked Anderson type operator on the Bethe lattice $\mathcal{B}$ is defined as
\begin{equation}\label{AndOpBetheEq1}
 H^\omega_\lambda:=\Delta+\lambda\sum_{y \in J} \omega_y P_y,
\end{equation}
\end{minipage}
\begin{minipage}{0.4\textwidth}
\begin{flushright}
  \includegraphics[width=2in,keepaspectratio]{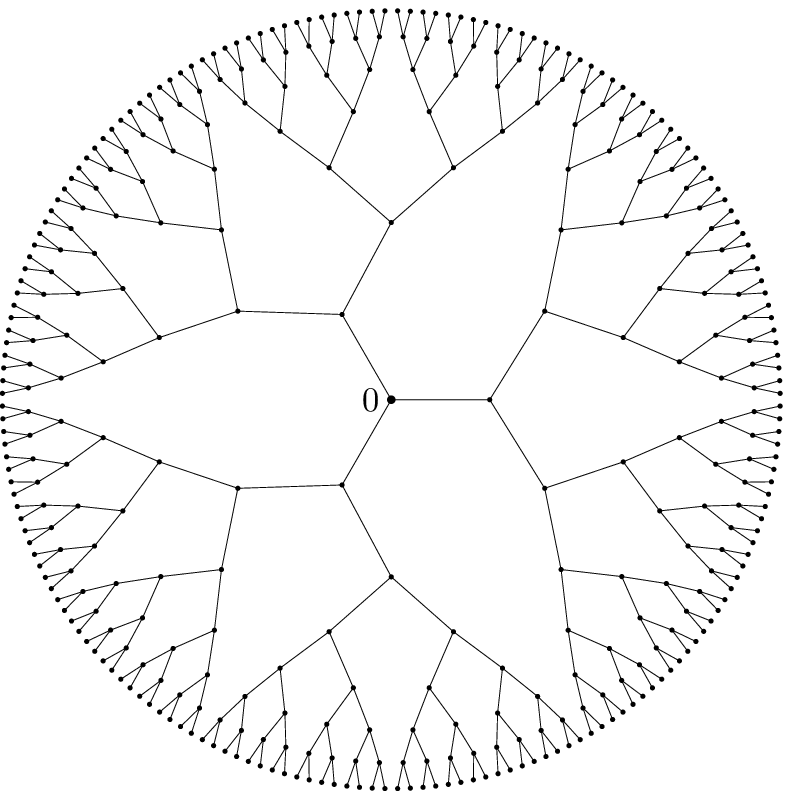}
    \end{flushright}
\end{minipage}
where $\lambda>0$ is the disorder parameter, $\{\omega_y\}_{y\in J}$ are independent identically distributed  real random variables following absolutely continuous distribution $\rho(x)dx$ where $\rho\in L^\infty(\RR)$ and $supp(\rho)$ compact. The projections $P_y$ are defined to be
\begin{equation}\label{defPot}
 (P_y \psi)(x)=\left\{\begin{matrix} \psi(x), &  \dist(y,x)\leq m_0
     ~\&~ y\prec x,\\ 0, & \text{other wise},\end{matrix}\right.
\end{equation}
for $y\in J$. Note that, $rank(P_y)=\frac{K^{m_0+1}-1}{K-1}$ which we
will denote by $M_0$. Here, $y\prec x$ means that the vertex $x$ is such
that $\dist(0,x)=\dist(0,y)+\dist(y,x)$; i.e., $y$ lies between $0$
and $x$ (equivalently, $ x $ is forward to $ y $). Finally, the indexing set $J$ is defined by
\begin{equation}\label{IndSetEq1}
 J=\{x\in V_{\mathcal{B}}: \dist(0,x)\in (m_0+1)\NN\cup\{0\}\}.
\end{equation}
Since our main concern is to study the eigenvalue process, we will work with the cut-off operator 
\begin{equation}\label{mainOp1}
H^\omega_{\lambda,L} = \chi_{\Lambda_L(0)}H^\omega_\lambda\chi_{\Lambda_L(0)}
\end{equation}
on $\ell^2(\Lambda_L(0))$, where 
$$\Lambda_L(x)=\{y\in V_{\mathcal{B}}:~\dist(x,y)\leq L \},$$
and the projection $\chi_U$, for  $U\subseteq V_{\mathcal{B}}$, is defined as 
$$(\chi_{U}\psi)(x)=\left\{\begin{matrix} \psi(x), & x\in U,\\ 0, &
    \text{other wise},\end{matrix}\right.\qquad\forall \psi\in\ell^2(\mathcal{B}).$$
But from now onwards, for convenience, we will denote $ \Lambda_{L}(0)  $ by $ \Lambda_{L} $.

To study the local eigenvalue statistics at $E_0\in\RR$, we will look at the limit of random point processes $\{\mu^{\omega,\lambda}_{E_0,L}\}_{L\in\NN}$ defined by
\begin{equation}\label{meaSeqEq1}
 \mu^{\omega,\lambda}_{E_0,L}(f)=\tr(f(|\Lambda_L|(H^\omega_{\lambda,L}-E_0))),~\forall f\in C_c(\RR),
\end{equation}
where $ C_{c}(\RR)  $ is  the set of all continuous functions with
compact support on $ \RR $.
As stated earlier, this method of defining the point process does not
provide local eigenvalue statistics over the Bethe lattice, but on the canopy tree. 
 The canopy tree $\mathcal{C}=(V_{\mathcal{C}},E_{\mathcal{C}})$ is defined recursively, layer by layer, starting from the boundary vertices, $\mathcal{C}_0=\partial\mathcal{C}$ (a countable set of vertices). 
 Each layer $\mathcal{C}_n$ (a countable collection of vertices) is partitioned into sets of $K$ vertices, which are joined to a single unique vertex in the layer $\mathcal{C}_{n+1}$. 
 Notice that, in the graph defined through this process, for any $x\in
\mathcal{C}_n$, we have, $\dist(x,\partial \mathcal{C})=n$. [See figure~[\ref{fig:image2}]].
\begin{figure}[ht]
\begin{center}
\includegraphics[height=1in,keepaspectratio]{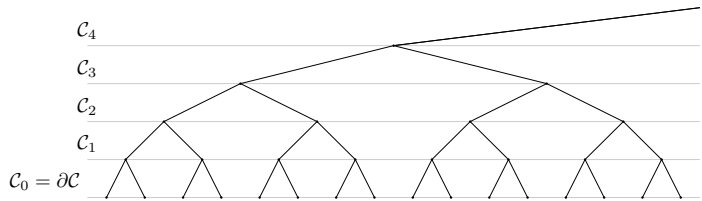}   
\end{center}
\caption{First few recursion steps for the canopy tree for $K=2$.}
\label{fig:image2}
\end{figure}
On the canopy graph we have the random operator
\begin{equation}\label{AndOpCanopyEq1}
H^\omega_{\mathcal{C},\lambda}=\Delta_{\mathcal{C}}+\lambda\sum_{y\in J_{\mathcal{C}}}\omega_y P_y,
\end{equation}
where $P_y:=\chi_{\tilde{\Lambda}_{m_0}(y)}$ for $y\in J_{\mathcal{C}}$,  $\lambda>0$, is the disorder parameter, and $\{\omega_y\}_{y\in J_{\mathcal{C}}}$ are i.i.d real random variables following the distribution $\rho(x)dx$. Here,
$$J_{\mathcal{C}}:=\{y\in V_{\mathcal{C}}: \dist(\partial
\mathcal{C},y)=m_0+(m_0+1)k,~\text{for some}~ k\in\NN\cup\{0\}\},$$
and 
$$\tilde{\Lambda}_{m_0}(y)=\{x\in V_{\mathcal{C}}: \dist(y,x)\leq m_0~\&~ \dist(\partial\mathcal{C},y)=\dist(\partial\mathcal{C},x)+\dist(x,y)\}.$$
Note that, removing the root of $ \Lambda_{L} $ of the Bethe lattice
we are left with a collection of $ K + 1  $ sub-trees each of which
can be identified with a sub-tree $ \tilde{\Lambda}_{L -1}(y)  $ of the canopy
tree, for some $ y $ such that $ \dist(y,\partial\mathcal{C}) = L - 1
$. Intuitively,
from the perspective of the root of the sub-tree $ \Lambda_L $, as $ L
\rightarrow \infty $ it describes the Bethe lattice; but, from the
perspective of the vertices near the boundary (in other words, the canopy) of $ \Lambda_L $, it describes the canopy tree.

With these definitions in place we have:
\begin{thm}\label{LocResThm}
 Let $H^\omega_{\lambda,L}$ be defined as in~\eqref{mainOp1}. Then, for any $0<s<1$, $E_0\in\RR$, and $\gamma>0$, there exist $\lambda_{\gamma,s}>0$ and $C>0$ such that,
\begin{equation}\label{exp}
 \sup_{\epsilon>0}\Ex_\omega\left[\left|\dprod{\delta_x}{\left(H^\omega_{\lambda,L}-E-i \epsilon\right)^{-1}\delta_y}\right|^s\right]\leq Ce^{-\gamma \dist(x,y)}
\end{equation}
for all $\lambda>\lambda_{\gamma,s}$ and $L$ large enough so that $x,y\in\Lambda_L$.
\end{thm}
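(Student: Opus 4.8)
The plan is to prove \eqref{exp} by the Aizenman--Molchanov fractional-moment method, adapted to two features of the model. First, $\mathcal{B}$ (hence each $\Lambda_L$) is a tree, so Green's functions factorize when one removes a vertex or a subtree. Second, the index set $J$ and the projections $P_y$ are arranged so that the supports $\tilde\Lambda_{m_0}(y)$, $y\in J$, are pairwise disjoint and tile $V_{\mathcal{B}}$ into \emph{blocks} of size $M_0$, the random potential acting on the block $b=\tilde\Lambda_{m_0}(y)$ as the scalar $\lambda\omega_y$ times the projection onto $b$; moreover, contracting each block to a point turns $\mathcal{B}$ into a tree again, and two distinct blocks are joined by at most one edge. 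Passing from $\mathcal{B}$ to $\Lambda_L$ only truncates the blocks that meet $\partial\Lambda_L$, dropping their rank below $M_0$, which is harmless; I keep the cut-off implicit. Write $\omega_b$ for the random variable attached to a block $b$.

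The key input is a finite-rank spectral-averaging estimate. Let $b$ be a block with projection $P_b$ of rank $n\le M_0$, and let $H_{\mathcal{T}}$ be the restriction of $H^\omega_{\lambda,L}$ to a connected subtree $\mathcal{T}\supseteq b$. Writing $H_{\mathcal{T}}$ in block form $\left(\begin{smallmatrix}\lambda\omega_b I+D&B\\ B^{*}&A\end{smallmatrix}\right)$ with respect to $\mathrm{ran}\,P_b\oplus\mathrm{ran}(1-P_b)|_{\mathcal{T}}$, with $D=P_b\Delta P_b$ and $B=P_b\Delta(1-P_b)$ deterministic and $A$ self-adjoint, the Schur complement (Feshbach) formula gives on $\mathrm{ran}\,P_b\cong\CC^{n}$
\[
  P_b(H_{\mathcal{T}}-z)^{-1}P_b=\bigl(\lambda\omega_b I-\Gamma\bigr)^{-1},\qquad \Gamma=z-D+B(A-z)^{-1}B^{*},
\]
so $\Gamma$ is a measurable function of $\{\omega_{b'}\}_{b'\subseteq\mathcal{T},\,b'\neq b}$ (in particular independent of $\omega_b$) and, since $A$ is self-adjoint, $\operatorname{Im}\Gamma:=(2i)^{-1}(\Gamma-\Gamma^{*})=(\operatorname{Im}z)I+B\operatorname{Im}((A-z)^{-1})B^{*}\geq(\operatorname{Im}z)I>0$ for $\operatorname{Im}z>0$. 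The bound I will invoke is that for each $0<s<1$ and $n\in\NN$ there is $C_{n,s}<\infty$, depending only on $n$, $s$ and $\mathrm{diam}(\mathrm{supp}\,\rho)$, with
\[
  \int_{\RR}\norm{(\lambda tI-M)^{-1}}^{s}\,\rho(t)\,dt\;\leq\;\frac{C_{n,s}\,\norm{\rho}_{\infty}}{\lambda^{s}}
\]
for every $\lambda>0$ and every $n\times n$ matrix $M$ with $\operatorname{Im}M\geq0$. This is the matrix analogue of the classical rank-one Herglotz estimate; the subtle point is that it must hold uniformly in $M$, and in the non-rank-one case one cannot reduce to a pointwise bound in $t$ but must control the non-normality of $M$ that the constraint $\operatorname{Im}M\geq0$ permits. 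I expect establishing this estimate to be the principal obstacle. Granting it, conditioning on $\{\omega_{b'}\}_{b'\neq b}$ and averaging over $\omega_b$ yields $\Ex_{\omega}\bigl[\norm{P_b(H_{\mathcal{T}}-z)^{-1}P_b}^{s}\mid\{\omega_{b'}\}_{b'\neq b}\bigr]\leq C_{M_0,s}\norm{\rho}_{\infty}\lambda^{-s}$ for all $z$ with $\operatorname{Im}z>0$, uniformly in the conditioned variables.

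Now fix $x,y\in\Lambda_L$, let $x=v_0,\dots,v_N=y$ be the geodesic with $N=\dist(x,y)$, and let $b_0\ni x,b_1,\dots,b_m\ni y$ be the blocks it traverses in order. Set $\mathcal{T}^{(0)}=\Lambda_L$ and, inductively, let $\mathcal{T}^{(j+1)}$ be the connected component of $b_{j+1}$ in $\mathcal{T}^{(j)}\setminus b_j$; then $\mathcal{T}^{(j)}$ is a subtree containing $b_j,\dots,b_m$ but none of $b_0,\dots,b_{j-1}$, and $b_j$ is joined to $\mathcal{T}^{(j+1)}$ by a single edge, from $p_{j+1}'\in b_j$ (the geodesic's exit point) to $p_{j+1}\in b_{j+1}$ (its entry point). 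Removing the block $b_j$ from $\mathcal{T}^{(j)}$ -- a finite-rank version of the one-vertex factorization on a tree, using that $b_j$ meets $\mathcal{T}^{(j+1)}$ in one edge and that $A=(1-P_{b_j})H_{\mathcal{T}^{(j)}}(1-P_{b_j})$ is block-diagonal over the components of $\mathcal{T}^{(j)}\setminus b_j$ -- gives, for $0\le j\le m-1$,
\[
  G_{\mathcal{T}^{(j)}}(q_j,y)=-\dprod{\delta_{q_j}}{(\lambda\omega_{b_j}I-\Gamma_j)^{-1}\delta_{p_{j+1}'}}\;G_{\mathcal{T}^{(j+1)}}(p_{j+1},y),
\]
where $q_0=x$, $q_j=p_j\in b_j$ for $j\ge1$, and $\Gamma_j$ is the Schur complement of $b_j$ in $\mathcal{T}^{(j)}$, which by the previous paragraph does not depend on $\omega_{b_0},\dots,\omega_{b_j}$; at the last step $G_{\mathcal{T}^{(m)}}(p_m,y)=\dprod{\delta_{p_m}}{(\lambda\omega_{b_m}I-\Gamma_m)^{-1}\delta_y}$. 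Iterating and raising to the power $s$, then integrating out $\omega_{b_0},\omega_{b_1},\dots,\omega_{b_m}$ in this order -- at the $j$-th step the only factor still depending on $\omega_{b_j}$ is $|\dprod{\delta_{q_j}}{(\lambda\omega_{b_j}I-\Gamma_j)^{-1}\delta_{\cdot}}|^{s}\le\norm{(\lambda\omega_{b_j}I-\Gamma_j)^{-1}}^{s}$, to which the conditional bound above applies with $\Gamma_j$ fixed -- gives
\[
  \sup_{\epsilon>0}\Ex_{\omega}\Bigl[\,\bigl|\dprod{\delta_x}{(H^\omega_{\lambda,L}-E-i\epsilon)^{-1}\delta_y}\bigr|^{s}\,\Bigr]\;\leq\;\Bigl(\frac{C_{M_0,s}\,\norm{\rho}_{\infty}}{\lambda^{s}}\Bigr)^{m+1},
\]
uniformly in $E\in\RR$ and in $L$ (with $x,y\in\Lambda_L$). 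Finally, since the geodesic segment inside each block has length at most $\mathrm{diam}(b_j)\le 2m_0$ and there are $m$ inter-block edges, $N\le(2m_0+1)(m+1)$, so $m+1\geq\dist(x,y)/(2m_0+1)$; choosing $\lambda_{\gamma,s}$ so that $C_{M_0,s}\norm{\rho}_{\infty}\lambda^{-s}\leq e^{-\gamma(2m_0+1)}$ for all $\lambda>\lambda_{\gamma,s}$ then yields \eqref{exp} (with $C=1$, or any $C\ge1$ to absorb the finitely many pairs with $\dist(x,y)$ small). In particular the a priori finiteness of the left-hand side is contained in this estimate, since the product always has at least one factor.
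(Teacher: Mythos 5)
Your strategy coincides with the paper's: factor the Green's function along the geodesic into a product of matrix elements of inverted Schur complements, one per block of the tiling of the tree by the supports of the $P_y$ (this is exactly the identity \eqref{GreenFunEq2} with the factors \eqref{GreenFunEq4}), use the tree structure to make the $j$-th Schur complement independent of the random variables attached to the blocks already traversed, and then integrate out the block variables one at a time, each averaging contributing a factor $C\lambda^{-s}$, so that the decay rate $\sim s\ln\lambda/(2m_0+1)$ can be made arbitrarily large. The ordering of the conditional expectations, the uniformity in $E$ and $\epsilon$, and the counting $m+1\geq \dist(x,y)/(2m_0+1)$ are all correct and match what the paper does.

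The genuine gap is the one you flag yourself: you assume, without proof, the uniform matrix spectral-averaging bound $\int\norm{(\lambda tI-M)^{-1}}^{s}\rho(t)\,dt\leq C_{n,s}\norm{\rho}_\infty\lambda^{-s}$ over all $n\times n$ matrices $M$ with $\Im M\geq 0$. Since the entire theorem reduces to this single-block estimate, granting it as a black box leaves the proof incomplete. The paper closes this step by reducing to the rank-one case: writing $P_p(H^\omega_{\lambda,L}-z)^{-1}P_p=[\lambda\omega_p I-A^\omega(z)]^{-1}$ as in \eqref{HertlotzMatEq1}, with $A^\omega(z)$ a matrix-valued Herglotz function independent of $\omega_p$ whose eigenvalues $E^\omega_j(z)$ all have positive imaginary part, it bounds the relevant matrix element by $\sum_{j=1}^{M_0}|E^\omega_j(z)-\lambda\omega_p-z|^{-1}$ and applies the scalar fractional-moment computation to each summand, yielding the constant $C|\Lambda'_{m_0}(p)|\lambda^{-s}$ per block, exactly as in \eqref{expResEq1}. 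Your observation that the obstacle is controlling the non-normality permitted by $\Im M\geq 0$ is well placed: the passage from a matrix element of the inverse to a sum of reciprocals of eigenvalue distances is immediate for normal matrices but needs justification for a general dissipative Schur complement (via diagonalization when available, or via a Krein-formula/weak-$L^1$ argument of Aizenman--Elgart--Naboko--Schenker--Stolz type). A complete proof must actually carry out one of these reductions rather than postulate the estimate; with that lemma supplied, your argument is the paper's argument.
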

The above theorem describes the exponential decay of the Green's
function. But, what is more important is the fact that any rate of
decay is achievable by changing the disorder parameter. The next
theorem is about the regularity of the density of states for the model.
\begin{thm}\label{ThmDosCanopy}
For any interval $I\subset\RR$, we have,
\begin{equation}\label{DosConvEq1}
 \frac{1}{|\Lambda_L|}\tr(E_{H^\omega_{\lambda,L}}(I))\xrightarrow{L\rightarrow\infty}n_{\mathcal{C},\lambda}(I)\qquad a.s.,
\end{equation}
where
\begin{equation}\label{dosDefEq1}
 n_{\mathcal{C},\lambda}(I)=\frac{K-1}{K}\sum_{n=0}^\infty K^{-n}\Ex_\omega\left[\dprod{\delta_{x_n}}{E_{H^\omega_{\mathcal{C},\lambda}}(I)\delta_{x_n}}\right].
\end{equation}
Here, $\{x_n\}_{n=0}^\infty$ is a sequence of vertices of $V_{\mathcal{C}}$, such that, $\dist(\partial\mathcal{C},x_n)=n$. The measure $n_{\mathcal{C},\lambda}$ is absolutely continuous w.r.t the Lebesgue measure.
\end{thm}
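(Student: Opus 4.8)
The plan is to deduce the almost-sure convergence from the corresponding convergence in mean, to establish the latter first for smooth test functions by a local-limit argument built on the fractional-moment decay of Theorem~\ref{LocResThm}, and to pass from smooth functions to indicators by a Wegner-type estimate that at the same time yields the absolute continuity; throughout, ``$\lambda$ large'' means large enough that Theorem~\ref{LocResThm} supplies a decay rate $\gamma>\log K$.

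\emph{Step 1 (reduction to the mean).} For fixed $L$, $\tr(E_{H^\omega_{\lambda,L}}(I))$ is a function of the finitely many independent, essentially bounded variables $\{\omega_y:y\in J\cap\Lambda_L\}$, and changing a single $\omega_y$ perturbs $H^\omega_{\lambda,L}$ by a self-adjoint operator of rank at most $\mathrm{rank}(P_y)=M_0$, hence changes $\tr(E_{H^\omega_{\lambda,L}}(I))$ by at most $M_0$ (eigenvalue interlacing). McDiarmid's bounded-difference inequality then gives $\mathbb{P}\big(|\tr(E_{H^\omega_{\lambda,L}}(I))-\Ex_\omega[\tr(E_{H^\omega_{\lambda,L}}(I))]|>\varepsilon|\Lambda_L|\big)\le 2e^{-c\varepsilon^2|\Lambda_L|}$, which is summable in $L$ since $|\Lambda_L|$ grows geometrically, so by Borel--Cantelli it suffices to prove $\tfrac1{|\Lambda_L|}\Ex_\omega[\tr(E_{H^\omega_{\lambda,L}}(I))]\to n_{\mathcal{C},\lambda}(I)$.

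\emph{Step 2 (geometry; reduction to one layer).} Write $\Lambda_L=\{0\}\cup\bigcup_{n=0}^{L-1}S_{n,L}$, where $S_{n,L}=\{x:\dist(0,x)=L-n\}$, equivalently the set of vertices whose forward subtree inside $\Lambda_L$ has depth $n$; then $|S_{n,L}|=(K+1)K^{L-1-n}$ and $|S_{n,L}|/|\Lambda_L|\to\frac{K-1}{K}K^{-n}$. Deleting the root identifies each of the $K+1$ components of $\Lambda_L\setminus\{0\}$ with a canopy ball $\tilde{\Lambda}_{L-1}(y)$, carrying $S_{n,L}$ onto the layer $\mathcal{C}_n$. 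Expanding $\tr(\cdot)=\sum_x\dprod{\delta_x}{(\cdot)\delta_x}$ over the $S_{n,L}$ and using $0\le\dprod{\delta_x}{E_{H^\omega_{\lambda,L}}(I)\delta_x}\le 1$, the contribution of the layers $n\ge N$ to $\tfrac1{|\Lambda_L|}\Ex_\omega[\tr(E_{H^\omega_{\lambda,L}}(I))]$, as well as the tail $\sum_{n\ge N}$ of $n_{\mathcal{C},\lambda}(I)$, is $O(K^{-N})$ uniformly in $L$. It therefore remains, for each fixed $n$, to prove
\begin{equation*}
\lim_{L\to\infty}\ \sup_{x\in S_{n,L}}\Big|\Ex_\omega[\dprod{\delta_x}{f(H^\omega_{\lambda,L})\delta_x}]-\Ex_\omega[\dprod{\delta_{x_n}}{f(H^\omega_{\mathcal{C},\lambda})\delta_{x_n}}]\Big|=0
\end{equation*}
for $f$ ranging over a countable dense family of smooth functions in $C_c(\RR)$; the passage to $f=\chi_I$ is deferred to Step~4.

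\emph{Step 3 (the local limit --- the crux).} Fix $n$, a vertex $x\in S_{n,L}$, and the corresponding $x_n\in\mathcal{C}_n$. For $R<L-n-1$ the radius-$R$ ball around $x$ in $\Lambda_L$ does not reach the root, so it depends only on the local canopy geometry and is isomorphic, as a graph carrying the random potential, to the radius-$R$ ball around $x_n$ in $\mathcal{C}$ --- the periodic placement of the projections $P_y$ matching once $L$ runs through a fixed residue class modulo $m_0+1$ (the remaining classes being identical with $\mathcal{C}$ replaced by a shifted copy). Let $H_R$ denote this common restriction. On the canopy side $H^\omega_{\mathcal{C},\lambda}$ is bounded and, viewing $H_R$ as a restriction of $H^\omega_{\mathcal{C},\lambda}$, one has $H_R\to H^\omega_{\mathcal{C},\lambda}$ in the strong resolvent sense as $R\to\infty$, so dominated convergence gives $\Ex_\omega[\dprod{\delta_{x_n}}{f(H^\omega_{\mathcal{C},\lambda})\delta_{x_n}}]=\lim_{R\to\infty}\Ex_\omega[\dprod{\delta_{x_n}}{f(H_R)\delta_{x_n}}]$. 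On the Bethe side I would expand $f(H^\omega_{\lambda,L})-f(H_R)$ by the Helffer--Sjöstrand formula together with the resolvent identity; since $H^\omega_{\lambda,L}$ and $H_R$ differ only by the hopping across the boundary sphere of the ball, each resulting term carries one Green's-function matrix element of $H^\omega_{\lambda,L}$ between points at distance $R+1$, bounded in $s$-th moment by $Ce^{-\gamma R}$ through Theorem~\ref{LocResThm}, while the remaining factors contribute bounded powers of $|\mathrm{Im}\,z|^{-1}$ that are absorbed by choosing an almost-analytic extension of $f$ that vanishes to high order on $\RR$. As the boundary sphere has at most $(K+1)K^R$ vertices, this gives
\begin{equation*}
\Ex_\omega\Big[\big|\dprod{\delta_x}{f(H^\omega_{\lambda,L})\delta_x}-\dprod{\delta_x}{f(H_R)\delta_x}\big|\Big]\le C_f\,(Ke^{-\gamma})^{R}
\end{equation*}
uniformly in $x\in S_{n,L}$ and in $L$. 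For $\lambda$ large, so that $Ke^{-\gamma}<1$, and $R=R(L)\to\infty$, the two displayed facts yield the limit required in Step~2, and assembling Steps~1--3 gives $\tfrac1{|\Lambda_L|}\Ex_\omega[\tr(f(H^\omega_{\lambda,L}))]\to\frac{K-1}{K}\sum_nK^{-n}\Ex_\omega[\dprod{\delta_{x_n}}{f(H^\omega_{\mathcal{C},\lambda})\delta_{x_n}}]$ for every smooth $f\in C_c(\RR)$.

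\emph{Step 4 (Wegner estimate and absolute continuity).} The projections $\{P_y\}_{y\in J}$ partition $V_{\mathcal{B}}$, and $\{P_y\}_{y\in J_{\mathcal{C}}}$ partition $V_{\mathcal{C}}$, so every vertex $x$ lies in the range of exactly one $P_y$; thus $P_y\delta_x=\delta_x$ and $P_yP_{y'}=0$ for $y\ne y'$. Decomposing $\ell^2$ as $\mathrm{ran}(P_y)\cong\CC^{M_0}$ together with its orthogonal complement, the $\omega_y$-dependence of $H^\omega$ on $\mathrm{ran}(P_y)$ is merely the scalar $\lambda\omega_y$, so the Schur complement formula gives $\dprod{\delta_x}{(H^\omega-z)^{-1}\delta_x}=\dprod{\delta_x}{(\lambda\omega_y+M(z))^{-1}\delta_x}$ with $M(z)$ an $M_0\times M_0$ matrix independent of $\omega_y$ and satisfying $\mathrm{Im}\,M(z)\le-(\mathrm{Im}\,z)\,I$ for $\mathrm{Im}\,z>0$. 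Hence $w\mapsto-\dprod{\delta_x}{(w-M(z))^{-1}\delta_x}$ is a Herglotz function of $w$ whose representing measure has total mass $\norm{\delta_x}^2=1$, whence $\int_\RR\mathrm{Im}\,\dprod{\delta_x}{(H^\omega-E-i\epsilon)^{-1}\delta_x}\,\rho(\omega_y)\,d\omega_y\le\pi\norm{\rho}_\infty/\lambda$, uniformly in $E$, $\epsilon$, and the remaining disorder variables. Averaging, integrating over $E\in I$, and using Fatou, $\tfrac1{|\Lambda_L|}\Ex_\omega[\tr(E_{H^\omega_{\lambda,L}}(I))]\le\norm{\rho}_\infty|I|/\lambda$ for every $L$, and the same bound holds for $H^\omega_{\mathcal{C},\lambda}$, so $n_{\mathcal{C},\lambda}(I)\le\norm{\rho}_\infty|I|/\lambda$; thus $n_{\mathcal{C},\lambda}$ is absolutely continuous and atomless, and squeezing $\chi_I$ between smooth functions from below and above upgrades Step~3 from smooth $f$ to $f=\chi_I$, which completes the argument. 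The principal obstacle is the uniform local-limit bound of Step~3: the exponential decay rate of the Green's function must exceed the exponential growth rate $\log K$ of the tree, which is exactly where the large-disorder hypothesis enters.
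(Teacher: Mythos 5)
Your proof is correct and arrives at the same limit through the same overall architecture as the paper --- layer decomposition of the trace, identification of the deep layers of $\Lambda_L$ with the canopy tree, Green's-function decay to localize the diagonal matrix elements, and a Wegner bound for absolute continuity --- but two of your technical components are genuinely different. First, for the almost-sure statement the paper fixes a layer, decouples along a sphere $S_{L-r-L_0}$ so that the forward subtrees hanging off it carry i.i.d.\ restrictions, and invokes the law of large numbers layer by layer; your McDiarmid bounded-difference argument (a change of one $\omega_y$ is a rank-$M_0$ perturbation, hence moves $\tr(E_{H^\omega_{\lambda,L}}(I))$ by $O(1)$ --- by at most $2M_0$ for a two-sided interval, not $M_0$, but this is immaterial) concentrates the full normalized trace around its mean in one stroke. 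This is cleaner: it applies directly to $f=\chi_I$ and sidesteps the paper's somewhat delicate interchange of the sum over layers, the a.s.\ limits, and the passage from resolvents to indicators. Second, your local limit compares $H^\omega_{\lambda,L}$ with its restriction to the metric ball $B_R(x)$, whose boundary sphere carries $\sim K^R$ edges, so you pay an entropy factor and must require $\gamma>\log K$; the paper instead decouples along a \emph{forward} subtree $\Lambda_{L,y}$, which on a tree is joined to the rest of $\Lambda_L$ by a single edge, so exactly one Green's-function factor appears per comparison and any $\gamma>0$ suffices (the $K^{L_0-1}$ vertices of the layer are absorbed by the per-vertex normalization in $C^\omega_{L,r}$). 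Since Theorem~\ref{LocResThm} supplies arbitrary $\gamma$ at large disorder, both routes close; the paper's is sharper in $\lambda$, yours (Helffer--Sj\"ostrand plus the geometric resolvent identity) is more standard and more robust to the geometry. Your Step 4 is essentially the paper's Lemma~\ref{lemWegner} rederived via the Schur complement. One shared loose end: the potential blocks of $\Lambda_L$ line up with those of $\mathcal{C}$ only for $L$ in a fixed residue class modulo $m_0+1$, so strictly speaking the limit is taken along such a subsequence; you at least flag this, while the paper passes over it silently.
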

With the above theorems in place, we can state our main result:
\begin{thm}\label{mainThm}
  For any $E_0\in\RR$, and $ \lambda > 0 $ large enough, define the sequence of measures $\{\mu^{\omega,\lambda}_{E_0,L}\}_{L\in\NN}$ by~\eqref{meaSeqEq1}. 
 For any bounded interval $I$, there exists a sequence of natural numbers $\{L_n\}_{n\in\NN}$ such that, the random variables $\{\mu^{\omega,\lambda}_{E_0,L_n}(I)\}_n$ converge to $\mathcal{P}_I^\omega$, a compound Poisson random variable, in the sense of distribution. 
 The characteristic function $\Ex[e^{\iota t\mathcal{P}^\omega_I}]$ is of the form $e^{\sum_{k=1}^{M_0} (e^{\iota tk}-1) p_k(I)}$ with the property  $p_k(I)\leq \frac{K n_{\mathcal{C},\lambda}(E_0)}{k}|I|$ for all $1\leq k\leq M_0$.
\end{thm}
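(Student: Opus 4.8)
The plan is to combine the strong localization of Theorem~\ref{LocResThm} with a decomposition of $\Lambda_L$ adapted to the self-similar canopy structure, reducing $\mu^{\omega,\lambda}_{E_0,L}(I)$ to a sum of \emph{independent} integer-valued contributions, and then to analyse one such contribution by a Wegner bound together with a higher-rank Minami-type estimate that forces the clusters to have size at most $M_0$.

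First I would fix a cut depth $D_L\to\infty$, chosen compatibly with the block structure of the $P_y$'s (possible since each $\operatorname{ran}P_y$ is supported on $m_0+1$ consecutive spheres), and remove from $\Lambda_L$ every edge joining a vertex at distance $L-D_L$ from the root to its parent. This splits $H^\omega_{\lambda,L}$ into $\sim(K+1)K^{L-1-D_L}$ \emph{independent} restrictions $H_c$, each acting on an isometric copy $\mathcal{T}_c$ of a depth-$D_L$ canopy subtree, together with a ``core'' near the root of size $O(K^{L-D_L})$. Writing $N_c$ for the number of eigenvalues of $H_c$ in $E_0+\tfrac1{|\Lambda_L|}I$, the decomposition step is the estimate
\[
\Big|\,\Ex\big[e^{\iota t\,\mu^{\omega,\lambda}_{E_0,L}(I)}\big]-\prod_c\Ex\big[e^{\iota t\,N_c}\big]\,\Big|\longrightarrow 0 ,
\]
which I would obtain from the geometric resolvent identity across the $O(K^{L-D_L})$ severed edges: each term is a fractional moment of the Green's function over distance $\gtrsim D_L$, hence at most $Ce^{-\gamma D_L}$ by Theorem~\ref{LocResThm}. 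Since $|\Lambda_L|\asymp K^L$, the quantity $K^Le^{-\gamma D_L}$ cannot be made to vanish with $D_L=o(L)$, so I would take $D_L=cL$ for a fixed $c\in(0,1)$ and choose $\lambda$ — hence the rate $\gamma$ of Theorem~\ref{LocResThm} — large enough that $\gamma c>\log K$; this is exactly why ``$\lambda$ large'' enters the present statement although it did not in Theorem~\ref{ThmDosCanopy}. The core, of size $O(K^{(1-c)L})$, carries an expected eigenvalue count $O(K^{-cL})\to0$ by the Wegner bound coming from $\rho\in L^\infty(\RR)$, hence is negligible.

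Next I would analyse a single cell. A first-order Wegner estimate gives $\Pr(N_c\ge 1)\le C|\mathcal{T}_c|/|\Lambda_L|$, so $\sum_c\Pr(N_c\ge1)=O(1)$ and $\max_c\Pr(N_c\ge1)\to0$; this is the regime in which $\prod_c\Ex[e^{\iota tN_c}]$ is well approximated by $\exp\!\big(\sum_c(\Ex[e^{\iota tN_c}]-1)\big)$. A higher-rank Minami-type bound — second order in the density $\rho$, obtained by running spectral averaging blockwise over the independent parameters $\{\omega_y\}_{y\in J\cap\mathcal{T}_c}$ — then shows $\sum_c\Pr(\mathcal{T}_c\text{ has two or more resonant sites})\to0$, so the dominant event in each cell has a single resonant parameter $\omega_y$ (i.e.\ $\lambda\omega_y$ near $E_0$). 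On the $M_0$-dimensional range of $P_y$ the random part is the scalar $\lambda\omega_y$, so when $\omega_y$ is resonant the eigenvalues of $H_c$ near $E_0$ are perturbations of $\lambda\omega_y+\mu$ with $\mu$ running over the spectrum of an effective $M_0\times M_0$ operator; the genuinely \emph{clustered} ones are those $\mu$ whose eigenspace is annihilated by the coupling to the rest of $\mathcal{T}_c$, which — for the blocks touching the outer boundary of $\Lambda_L$ — are exact, degenerate eigenvalues of $H_c$ equal to $\lambda\omega_y$, forced by the permutation symmetry of the $K$-ary tree underlying $\operatorname{ran}P_y$. Since the corresponding multiplicities sum to at most $M_0$, every cluster has size in $\{1,\dots,M_0\}$. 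Writing $q^{(c)}_k=\Pr(N_c=k)$ and passing to a subsequence $\{L_n\}$ on which $L_n$ is constant modulo $m_0+1$ (so that the shape of the boundary-touching blocks of $\Lambda_{L_n}$ stabilises) and along which the bounded sums $\sum_cq^{(c)}_k$ converge to limits $p_k(I)$, the displayed comparison yields $\Ex[e^{\iota t\mathcal{P}^\omega_I}]=\exp\!\big(\sum_{k=1}^{M_0}(e^{\iota tk}-1)p_k(I)\big)$, i.e.\ $\mathcal{P}^\omega_I$ is a compound Poisson variable.

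Finally, the bound $p_k(I)\le\frac{K\,n_{\mathcal{C},\lambda}(E_0)}{k}|I|$ I would read off from the density of states: a $k$-cluster contributes exactly $k$ eigenvalues, so $k\,p_k(I)\le\sum_j j\,p_j(I)=\lim_n\Ex\big[\mu^{\omega,\lambda}_{E_0,L_n}(I)\big]$, and by Theorem~\ref{ThmDosCanopy}, the absolute continuity of $n_{\mathcal{C},\lambda}$, and the Wegner bound (which supplies the uniform integrability needed to exchange limit and expectation) the right-hand side equals $n_{\mathcal{C},\lambda}(E_0)|I|\le K\,n_{\mathcal{C},\lambda}(E_0)|I|$; tracking the combinatorics between the number of depth-$n$ cells, $\sim\tfrac{K-1}{K}K^{-n}|\Lambda_L|$, and the per-depth resonance rate absorbs the factor $K$. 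The step I expect to be hardest is the higher-rank Minami bound together with the cluster-size analysis: it must hold uniformly over the growing cells $\mathcal{T}_c$ while correctly identifying the tree-symmetry degeneracies that pin down the cluster-size distribution, all under the constraint that, since $\log|\Lambda_L|\asymp L$ on the tree, the decoupling length $D_L$ must grow linearly in $L$ — which is affordable only once the disorder is large enough to push the localization rate above $\log K$.
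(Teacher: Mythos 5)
Your proposal follows essentially the same route as the paper: decouple $\Lambda_L$ into independent subtrees cut at a depth proportional to $L$, paying for the non-vanishing surface-to-volume ratio with the arbitrarily fast fractional-moment decay of Theorem~\ref{LocResThm} at large $\lambda$ (the paper's Lemma~\ref{infDivLem1}); replace the product of cell characteristic functions by the exponential of a sum using the Wegner bound; cap the cluster size at $M_0$ by a higher-rank Minami estimate; and obtain $p_k(I)\le \frac{K}{k}n_{\mathcal{C},\lambda}(E_0)|I|$ from $\mathbb{P}[\eta=k]\le k^{-1}\Ex[\eta]$ together with the convergence of the expected eigenvalue count (the paper's Lemma~\ref{limMeaLem}).

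Two places where your sketch is materially looser than the paper's argument. First, in the decoupling step the error for a vertex $y$ at distance $d$ from the root of its cell is a product of Green's functions one of which decays only over distance $d$, and $d$ ranges from $0$ to the cell depth; your claim that every term is $O(e^{-\gamma D_L})$ fails for $y$ near the cut. The paper splits each cell into the layer within distance $l_L$ of the cut, whose total cardinality $O(K^{2\alpha L})=o(|\Lambda_L|)$ lets it be discarded by the Wegner bound~\eqref{WegnerEq1}, and the remainder, where the decay is over distance at least $l_L$; this is exactly why $\alpha<1/2$ appears. Second, the step you flag as hardest --- a Minami-type bound forcing clusters of size at most $M_0$ --- is precisely the extended Minami estimate already proved as Lemma~\ref{lemMinami}; its proof is a rank-$M_0$ version of the standard doubling trick, and the bound~\eqref{MinamiEq1} applied to the cell operators kills $\sum_c\mathbb{P}[N_c>M_0]$ directly, with no need for the resonant-site counting or tree-symmetry degeneracy analysis you sketch (which, as written, is heuristic rather than a proof). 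Finally, a small quantitative slip: the limit of $\Ex_\omega[\mu^{\omega,\lambda}_{E_0,L}(\chi_I)]$ is $K\,n_{\mathcal{C},\lambda}(E_0)|I|$, not $n_{\mathcal{C},\lambda}(E_0)|I|$; your stated bound on $p_k(I)$ is unaffected, but the factor $K$ comes from the normalization of $n_{\mathcal{C},\lambda}$ in~\eqref{dosDefEq1}, not from cell combinatorics.
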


It should be noted that the operators $H^\omega_{\mathcal{C},\lambda,L}$ and $H^\omega_{\lambda,L}$ can have non-trivial multiplicity. 
This is because of the fact that any symmetry of the tail
sub-trees ($\tilde{\Lambda}_{m_0}(y)$ for
$\dist(y,\partial\mathcal{C})=m_0$) produces a unitary operator which
fixes $H^\omega_{\mathcal{C},\lambda}$ (similar thing happens in the case of $H^\omega_{\lambda,L}$).

The eigenvalue statistics in one dimension was studied by Molchanov~\cite{M1}, and later for higher dimensions by Minami~\cite{M2}. 
In the region of fractional localization (where~\eqref{exp} holds), they showed that the statistics is Poisson. 
Subsequently, the Poisson statistics was shown for the trees by Aizenman-Warzel~\cite{AW}, and by Geisinger~\cite{GL} for regular graphs. 
In some recent results, Germinet-Klopp~\cite{GK} extended the results of Killipp-Nakano~\cite{NK}. These works are focused on eigenfunction statistics in the regime of pure point spectrum.
An analogue of Minami's~\cite{M2} work was done by Dolai-Krishna~\cite{DK2}, with $\alpha$-H\"{o}lder continuous single site distribution. 
There are also works in the region of absolutely continuous spectrum, like Kotani-Nakano~\cite{KN}, Avila-Last-Simon~\cite{ALS}, and Dolai-Mallick~\cite{MD1}.
There are a few results for spectral statistics for non-rank one case, for example, Hislop-Krishna~\cite{HK1} and Combes-Germinet-Klein~\cite{CGK1}.

This work is inclined towards  the works of Aizenman-Warzel~\cite{AW} and Hislop-Krishna~\cite{HK1}. 
In the work~\cite{AW}, the authors concluded simple Poisson point
process as the eigenvalue statistics for the Anderson tight binding model over the canopy tree. 
One of the important points they raised is the fact that infinite divisibility of the eigenvalue process cannot be taken similar to $\ZZ^d$ case. 
This is because $\frac{|\partial\Lambda_L|}{|\Lambda_L|}$ does not converges to zero as $L\rightarrow\infty$.
But, because of the exponential nature of the growth of the surface area, and the fact that we can achieve any rate  of decay in Theorem~\ref{LocResThm}, we can get the infinite divisibility needed for Poisson process by dividing the trees into sub-trees of height $\approx \alpha L$ (for $\alpha>0$ small enough). 
Usually, this would fail to produce the correct decay needed to
establish the infinite divisibility; but in this case, this   is enough.
\section{Preliminaries}
In this section, some important results are established which are
essential for proving the main results.  Before that, a few notations are needed.
For $y\in\Lambda_L$, we will denote 
\begin{equation}\label{defForBoxEq1}
 \Lambda^\prime_l(y):=\{x\in\Lambda_L: \dist(x,y)\leq l~\&~y\prec x\},
\end{equation}
for $l\in\NN$. Therefore, for any $p\in J$,  the projections $P_p=\chi_{\Lambda^\prime_{m_0}(p)}$. Using the resolvent equation between $H^\omega_{\lambda,L}$ and
$$\tilde{H}^\omega_L:=(I-P_p)H^\omega_{\lambda,L} (I-P_p)+P_p\Delta P_p+\omega_p P_p,$$
we have,
\begin{align}
&\dprod{\delta_x}{(H^\omega_{\lambda,L}-z)^{-1}\delta_y}=\nonumber\\
&-\left\langle\delta_x,\left[P_p\Delta P_p+(\omega_p-z)P_p-P_p\Delta(\chi_{\Lambda_L}-P_p)(\tilde{H}^\omega_L-z)^{-1}(\chi_{\Lambda_L}-P_p)\Delta P_p\right]^{-1}\right.\nonumber\\
&\qquad\qquad\qquad\qquad\qquad\qquad\qquad\left. P_p\Delta(\chi_{\Lambda_L}-P_p)(\tilde{H}^\omega_L-z)^{-1}\delta_y\right\rangle\label{HertlotzMatEq2}
\end{align}
for $x\in \Lambda_{m_0}^\prime(p)$ and $y\in\Lambda_L\setminus\Lambda_{m_0}^\prime(p)$. By taking $y\in\Lambda^\prime_{m_0}(p)$, we can also show that,
\begin{align}\label{HertlotzMatEq1}
  \begin{split}
    &P_p(H^\omega_{\lambda,L}-z)^{-1}P_p= \\
    &\hspace{1cm}\left[P_p\Delta
      P_p+(\omega_p-z)P_p-P_p\Delta(\chi_{\Lambda_L}-P_p)(\tilde{H}^\omega_L-z)^{-1}(\chi_{\Lambda_L}-P_p)\Delta
      P_p\right]^{-1}.
  \end{split}
\end{align}
Using the fact that there is a unique path from $x$ to $y$, (in the sense that if we remove any edge within this path, then $x$ and $y$ will lie in different components) say, $x=x_1,\ldots,x_n=y$, and taking $n_1<n$ so that $x_{n_1}\in\Lambda^\prime_{m_0}(p)$ and $x_{n_1+1}\in\Lambda_L\setminus\Lambda^\prime_{m_0}(p)$, the expression~\eqref{HertlotzMatEq2} gives us
\begin{equation}\label{GreenFunEq1}
 \dprod{\delta_x}{(H^\omega_{\lambda,L}-z)^{-1}\delta_y}=\Gamma_{x_1,x_{n_1}}\dprod{\delta_{x_{n_1+1}}}{(\tilde{H}^\omega_L-z)^{-1}\delta_y},
\end{equation}
where $\Gamma_{a,b}$ is
\begin{equation}\label{GreenFunEq3}
\dprod{\delta_a}{\left[P_p\Delta P_p+(\omega_p-z)P_p-P_p\Delta(\chi_{\Lambda_L}-P_p)(\tilde{H}^\omega_L-z)^{-1}(\chi_{\Lambda_L}-P_p)\Delta P_p\right]^{-1}\delta_b}
\end{equation}
for $a,b\in\Lambda^\prime_{m_0}(p)$. Repeating this procedure inductively, we have,
\begin{equation}\label{GreenFunEq2}
 \dprod{\delta_x}{(H^\omega_{\lambda,L}-z)^{-1}\delta_y}=\prod_{i=0}^m \Gamma_{x_{n_i+1},x_{n_{i+1}}},
\end{equation}
with $x=x_1,\ldots,x_n=y$  the shortest path between $x$ and $y$, and $\{n_i\}_{i=1}^m$ with the property that for each $i$ there exists $p_i\in J$ such that $x_{n_{i-1}+1},x_{n_i}\in\Lambda^\prime_{m_0}(p_i)$,  $n_0=0$, and $n_{m+1}=n$.
Finally,
\begin{align}\label{GreenFunEq4}
&\Gamma_{x_{n_i+1},x_{n_{i+1}}}=\Bigg\langle\delta_{x_{n_i+1}},\Bigg[P_{p_i}\Delta P_{p_i}+(\omega_{p_i}-z)P_{p_i}\\
&\left.\left.-P_{p_i}\Delta\left(\chi_{\Lambda_L}-\sum_{j=1}^i P_{p_j}\right)(\tilde{H}^\omega_{i,L}-z)^{-1}\left(\chi_{\Lambda_L}-\sum_{j=1}^iP_{p_j}\right)\Delta P_{p_i}\right]^{-1}\delta_{x_{n_{i+1}}}\right\rangle, \nonumber
\end{align}
\noindent
\begin{minipage}{0.8\textwidth}
 where
$$\tilde{H}^\omega_{i,L}:=(\chi_{\Lambda_L}-P_{p_i})H^\omega_{i-1,L}
(\chi_{\Lambda_L}-P_{p_i})+P_{p_i}\Delta P_{p_i}+\omega_{p_i} P_{p_i}$$
with $\tilde{H}^\omega_{0,L}:=H^\omega_{\lambda,L}$.
Observe that, 
$$P_{p}\Delta\left(\chi_{\Lambda_L}-P_{p}\right)(\tilde{H}^\omega_{L}-z)^{-1}\left(\chi_{\Lambda_L}-P_{p}\right)\Delta P_{p}$$ 
is multiplication operator over the boundary of the sub-tree $\Lambda^\prime_{m_0}(p)$. After removing the sub-tree $\Lambda^\prime_{m_0}(p)$, we are left with disjoint trees, and $\tilde{H}^\omega_L$ restricted on each of these sub-trees are independent of each other. For  $y\in\Lambda^\prime_{m_0}(p)$, define
$$N_y=\{x\in\Lambda_L: \dist(x,y)=1~\&~x\not\in\Lambda^\prime_{m_0}(p) \},$$
which is the set of neighbours of the vertex $y$, which lie outside $
\Lambda'_{m_{0}(p) }  $. We have,
\begin{align*}
&P_{p}\Delta\left(\chi_{\Lambda_L}-P_{p}\right)(\tilde{H}^\omega_{L}-z)^{-1}\left(\chi_{\Lambda_L}-P_{p}\right)\Delta P_{p}\\
 &\qquad=\sum_{y\in\Lambda^\prime_{m_0}(p)} \proj{\delta_y}\sum_{x\in N_y}\dprod{\delta_x}{(\tilde{H}^\omega_{L}-z)^{-1}\delta_x}
\end{align*}
and the independence of $\tilde{H}^\omega_L$ on each of the subtree
implies the independence of $\left\{\dprod{\delta_x}{(\tilde{H}^\omega_{L}-z)^{-1}\delta_x}\right\}_x$ for each $x\in \cup_{y\in\Lambda^\prime_{m_0}(p)} N_y$.
\end{minipage}
\begin{minipage}{0.2\textwidth}
\begin{flushright}
\includegraphics[height=3.5in,keepaspectratio]{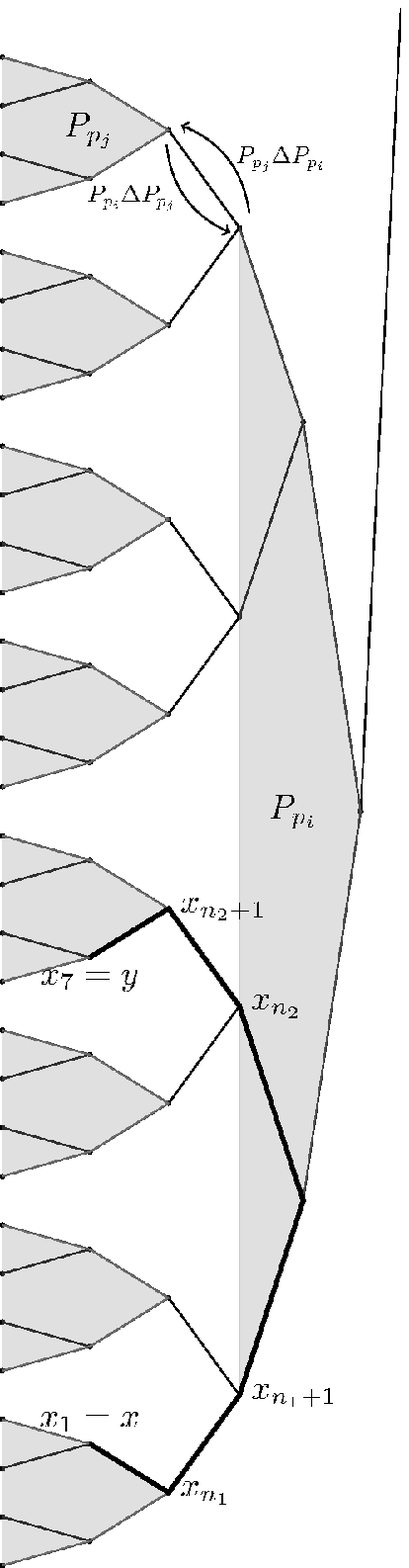} 
\end{flushright}
\end{minipage}

With these notations, we are ready to establish the Wegner and the
Minami estimates. Notice that, $rank(P_p)$, (which we have called as $
M_{0}  $) is same as $|\Lambda_{m_0}^\prime(p)|$.
Even though there are multiple proofs of the Wegner estimate, for example~\cite{CGK,CHK,CH}, those proofs are in more general settings, and use more sophisticated techniques. 
In the case of projection valued perturbations, the proof can be done
using rank one case as the basis, as done here.
\begin{lem}\label{lemWegner}{\bf (Wegner Estimate)}
 For any bounded interval $I\subset\RR$, we have,
 \begin{align}
  &\Ex_\omega\left[\dprod{\delta_x}{E_{H^\omega_{\lambda,L}}(I)\delta_x}\right]\leq
   C|I|,\label{WegnerEq1} \text{and}\\
  & \Ex_\omega[\tr(E_{H^\omega_{\lambda,L}}(I))]\leq C|I||\Lambda_L|\label{WegnerEq2}.
 \end{align}
\end{lem}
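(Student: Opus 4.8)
The plan is to reduce both inequalities to a single application of spectral averaging of rank-one type in one random variable. The structural fact that makes this possible is that, since $J$ consists of the vertices at distances in $(m_0+1)\NN\cup\{0\}$ from the root and $P_y=\chi_{\Lambda'_{m_0}(y)}$, the sets $\{\Lambda'_{m_0}(y):y\in J\}$ partition $\Lambda_L$; hence for each $x\in\Lambda_L$ there is a unique $p=p(x)\in J$ with $x\in\Lambda'_{m_0}(p)$, i.e.\ $P_p\delta_x=\delta_x$, and among $\{\omega_y\}_{y\in J}$ only $\omega_p$ ``reaches'' the site $x$. First, by Stone's formula together with Fatou's lemma and Tonelli's theorem, for any bounded interval $I$ with endpoints $a<b$, any $\delta>0$, and any unit vector $\phi$ one has
\[
\Ex_\omega\!\left[\dprod{\phi}{E_{H^\omega_{\lambda,L}}(I)\phi}\right]\le\frac1\pi\liminf_{\epsilon\downarrow0}\int_{a-\delta}^{b+\delta}\Ex_\omega\!\left[\dprod{\phi}{\operatorname{Im}\big(H^\omega_{\lambda,L}-E-i\epsilon\big)^{-1}\phi}\right]dE,
\]
so it suffices to bound $\Ex_\omega[\dprod{\delta_x}{\operatorname{Im}(H^\omega_{\lambda,L}-E-i\epsilon)^{-1}\delta_x}]$ by a constant uniformly in $E\in\RR$ and $\epsilon>0$: then \eqref{WegnerEq1} follows on letting $\delta\downarrow0$, and \eqref{WegnerEq2} follows by summing \eqref{WegnerEq1} over the $|\Lambda_L|$ vertices of $\Lambda_L$.

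To get this uniform bound I fix $x$ and $p=p(x)$, condition on $\{\omega_y\}_{y\ne p}$, and write $H^\omega_{\lambda,L}=A+\lambda\omega_p P_p$ with $A$ self-adjoint and independent of $\omega_p$. The Feshbach--Schur identity, which is exactly \eqref{HertlotzMatEq1}, gives on $\operatorname{Ran}(P_p)$
\[
P_p\big(H^\omega_{\lambda,L}-z\big)^{-1}P_p=\big(\lambda\omega_p\,I+M_p(z)\big)^{-1},
\]
where $M_p(z)$ is the self-energy matrix of \eqref{HertlotzMatEq1}; it does not depend on $\omega_p$ (one uses $P_pP_y=0$ for $y\ne p$, the sets $\Lambda'_{m_0}(\cdot)$ being pairwise disjoint). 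Setting $\tilde A:=(\chi_{\Lambda_L}-P_p)A(\chi_{\Lambda_L}-P_p)$ and $C:=(\chi_{\Lambda_L}-P_p)\Delta P_p$, the self-energy term is $C^{*}(\tilde A-z)^{-1}C$ with $\tilde A$ self-adjoint (and $\Delta$ real symmetric), so it has imaginary part $C^{*}\operatorname{Im}[(\tilde A-z)^{-1}]C\ge0$ for $\operatorname{Im}z>0$. Hence $\operatorname{Im}M_p(E+i\epsilon)\le-\epsilon I$ on $\operatorname{Ran}(P_p)$, and every eigenvalue $\mu_j(z)$ of $M_p(z)$ satisfies $\operatorname{Im}\mu_j(z)\le-\epsilon<0$.

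The averaging is then elementary. For $\operatorname{Im}z>0$ one has $\operatorname{Im}(H^\omega_{\lambda,L}-z)^{-1}\ge0$, and since $\proj{\delta_x}\le P_p$,
\[
\dprod{\delta_x}{\operatorname{Im}(H^\omega_{\lambda,L}-z)^{-1}\delta_x}\le\tr\!\big(P_p\operatorname{Im}(H^\omega_{\lambda,L}-z)^{-1}P_p\big)=\operatorname{Im}\tr\!\big[(\lambda\omega_p I+M_p(z))^{-1}\big]=\sum_{j}\frac{|\operatorname{Im}\mu_j(z)|}{|\lambda\omega_p+\mu_j(z)|^{2}},
\]
a sum of at most $M_0$ Poisson kernels in the single variable $\omega_p$. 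Integrating against $\rho(\omega_p)\,d\omega_p$ and using $\int_\RR\frac{|b|}{(\lambda t+a)^2+b^2}\,dt=\pi/\lambda$ for $b\ne0$ gives $\Ex_{\omega_p}[\dprod{\delta_x}{\operatorname{Im}(H^\omega_{\lambda,L}-z)^{-1}\delta_x}]\le\pi M_0\norm{\rho}_\infty/\lambda$, uniformly in $\{\omega_y\}_{y\ne p}$ and in $z$ with $\operatorname{Im}z>0$; integrating out the remaining variables preserves the bound. Substituting this into the displayed inequality of the first paragraph and sending $\delta\downarrow0$ yields \eqref{WegnerEq1}, and then \eqref{WegnerEq2}, with $C=M_0\norm{\rho}_\infty/\lambda$.

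The only step that is not purely mechanical is the claim in the second paragraph that $M_p(z)$ is a matrix-valued Herglotz function, i.e.\ that the self-energy term has non-negative imaginary part; this is what keeps the eigenvalues $\mu_j(z)$ strictly below the real axis and makes each Lorentzian integrate to exactly $\pi/\lambda$, and it is the direct analogue of the rank-one Herglotz property, following from self-adjointness of $(\chi_{\Lambda_L}-P_p)H^\omega_{\lambda,L}(\chi_{\Lambda_L}-P_p)$ and $\Delta^{*}=\Delta$. If one wants the sharp rank-one constant $C=\norm{\rho}_\infty/\lambda$ (no factor $M_0$), one can instead observe that, with the other $\omega_y$ frozen, $w\mapsto\dprod{\delta_x}{(wI+M_p(z))^{-1}\delta_x}$ becomes, after the reflection $w\mapsto-\overline w$, a scalar Herglotz function of total spectral mass $\norm{\delta_x}^2=1$ whose boundary value at $w=\lambda\omega_p$ equals $\dprod{\delta_x}{\operatorname{Im}(H^\omega_{\lambda,L}-z)^{-1}\delta_x}$, and then integrate the single resulting Poisson kernel. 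The measure-theoretic interchanges in the first paragraph (Stone's formula for a general bounded interval via monotone approximation by open subintervals, Fatou, Tonelli) are routine because $H^\omega_{\lambda,L}$ acts on the finite-dimensional space $\ell^2(\Lambda_L)$.
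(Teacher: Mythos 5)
Your proposal is correct and follows essentially the same route as the paper: Stone's formula plus Tonelli to reduce to a uniform bound on $\Ex_\omega[\dprod{\delta_x}{\Im(H^\omega_{\lambda,L}-z)^{-1}\delta_x}]$, then the Schur-complement identity \eqref{HertlotzMatEq1} to write $P_p(H^\omega_{\lambda,L}-z)^{-1}P_p$ as the resolvent of a Herglotz matrix in the single variable $\omega_p$, diagonalization into at most $M_0$ Poisson kernels, and spectral averaging against $\rho$. Your version is slightly more explicit than the paper's in deducing \eqref{WegnerEq1} from the trace of the $P_p$-block and in tracking the $\lambda$-dependence of the constant, but the mechanism is identical.
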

\begin{proof}
The proof follows  similar steps as in the rank one case. Using Stone's formula~\cite[Theorem VII.13]{RS}, we have,
$$\frac{1}{2}\left(E_{H^\omega_{\lambda,L}}(I^{cls})+E_{H^\omega_{\lambda,L}}(I^{int})\right)=\text{s-}\lim_{\epsilon\rightarrow 0}\int_I \Im (H^\omega_{\lambda,L}-E-\iota\epsilon)^{-1}dE,$$
(here, $ I^{cls}  $ and $ I^{int}  $ are the closure and interior of $
I $, respectively) and since $\Im (H^\omega_{\lambda,L}-E-\iota\epsilon)^{-1}$ is non-negative definite, we can use Tonelli's theorem~\cite[Theorem 3.7.7]{BC1} to get
\begin{align}
&\Ex_\omega\left[\dprod{\delta_x}{\frac{1}{2}\left(E_{H^\omega_{\lambda,L}}(I^{cls})+E_{H^\omega_{\lambda,L}}(I^{int})\right)\delta_x}\right]\nonumber\\
&\qquad=\lim_{\epsilon\rightarrow0}\Ex_\omega\left[\int_I \dprod{\delta_x}{\Im (H^\omega_{\lambda,L}-E-\iota\epsilon)^{-1}\delta_x}dE\right]\nonumber\\
&\qquad=\lim_{\epsilon\rightarrow0}\int_I \Ex_\omega[\dprod{\delta_x}{\Im (H^\omega_{\lambda,L}-E-\iota\epsilon)^{-1}\delta_x}]dE\label{wegPfEq1}.
\end{align}
So, to get~\eqref{WegnerEq1} and~\eqref{WegnerEq2}, we need to estimate
$$\Ex_\omega[\dprod{\delta_x}{\Im (H^\omega_{\lambda,L}-E-\iota\epsilon)^{-1}\delta_x}]$$
independent of $E$ and $\epsilon$.

We can re-write~\eqref{HertlotzMatEq1},as
$$P_p(H^\omega_{\lambda,L}-z)^{-1}P_p=\left[\omega_pI - A^\omega(z)\right]^{-1},\quad\forall z\in\CC^{+},$$
where we have collected all the terms occurring in~\eqref{HertlotzMatEq1} other than $ \omega_{p}  $
into $ A^{\omega }(z)  $. We can see that $ A^{\omega }(z)  $ doesn't
depend on $ \omega_{p}  $. 
Now, let $\{E_i^{\tilde{\omega},p,z}\}_{i}$ denote the eigenvalues of
the matrix $A^\omega(z)$. Because
$P_p(H^\omega_{\lambda,L}-z)^{-1}P_p$ is a matrix valued Herglotz
function, one can see that, $A^\omega(z)$ is also a matrix valued
Herglotz function, and so all the eigenvalues have positive imaginary
part.
Hence,
\begin{align}
\Ex_\omega[\tr(\Im P_y(H^\omega_{\lambda,L}-z)^{-1}P_y)]&=\Ex_{\tilde{\omega}}\left[\int \sum_i \Im \frac{1}{x-E_i^{\tilde{\omega},p,z}}\rho(x)dx\right]\nonumber\\
&\leq \norm{\rho}_\infty\sum_i\Ex_{\tilde{\omega}}\left[\int \frac{\Im E_i^{\tilde{\omega},p,z}}{(x-\Re E_i^{\tilde{\omega},p,z})^2+(E_i^{\tilde{\omega},p,z})^2}dx\right]\nonumber\\
&\leq \pi m_0\norm{\rho}_\infty.\label{WegnerEq3}
\end{align}
Using the above estimate we get,
\begin{align*}
 \Ex_\omega[\tr(E_{H^\omega_{\lambda,L}}(I))]&=\sum_{y\in J}\Ex_\omega[\tr(P_y E_{H^\omega_{\lambda,L}}(I)P_y)]\\
 &=\sum_{y\in J}\lim_{\epsilon\rightarrow0}\int_I \Ex_\omega[\tr(P_y(H^\omega_{\lambda,L}-E-\iota\epsilon)^{-1}P_y)]dE\\
 &\leq \norm{\rho}_\infty \pi |\Lambda_L||I|.
\end{align*}

\end{proof}

Since, the model we are concerned with involves higher rank
perturbations, it is possible that operators in our model might have
eigenvalues of multiplicity greater than one. Therefore, in general we
might not be able to get a proper Minami estimate. Below, we prove
an extended version of the Minami estimate.

\begin{lem}\label{lemMinami}{\bf (Extended Minami Estimate)}
 For any bounded interval $I\subset\RR$, we have,
\begin{equation}\label{MinamiEq1}
 \sum_{m\geq M_0}\mathbb{P}[\tr(E_{H^\omega_{\lambda,L}}(I))>m]\leq (\pi\norm{\rho}_\infty |\Lambda_L||I|)^2,
\end{equation}
where $ M_{0}  $ is the common rank of the perturbing projections.
\end{lem}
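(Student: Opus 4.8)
The plan is to reduce this to the standard Minami estimate for rank-one perturbations, exploiting the product structure of the resolvent established in the Preliminaries. Recall from the Wegner proof that for each $p\in J$ we can write $P_p(H^\omega_{\lambda,L}-z)^{-1}P_p = (\omega_p I - A^\omega(z))^{-1}$, where $A^\omega(z)$ is a matrix-valued Herglotz function on $\ell^2(\Lambda'_{m_0}(p))$ that does not depend on $\omega_p$. The key observation is that, because $\tr(E_{H^\omega_{\lambda,L}}(I)) = \sum_{y\in J}\tr(P_y E_{H^\omega_{\lambda,L}}(I)P_y)$, and each summand $\tr(P_y E_{H^\omega_{\lambda,L}}(I)P_y)$ is an integer in $\{0,1,\dots,M_0\}$, the event $\{\tr(E_{H^\omega_{\lambda,L}}(I))>m\}$ for $m\geq M_0$ forces \emph{at least two} of the blocks $P_y$ to register a nonzero contribution. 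So I would first prove the pointwise bound: for $m\geq M_0$,
\begin{equation*}
 \mathbb{P}[\tr(E_{H^\omega_{\lambda,L}}(I))>m] \leq \sum_{\substack{y,y'\in J\\ y\neq y'}}\mathbb{P}[\tr(P_y E_{H^\omega_{\lambda,L}}(I)P_y)\geq 1,\ \tr(P_{y'} E_{H^\omega_{\lambda,L}}(I)P_{y'})\geq 1],
\end{equation*}
which follows because the number of terms $m$ for which we can have $\tr(E(I))>m$ while only one block is active is at most $M_0-1< M_0 \le m$; more carefully, one counts that each multi-index configuration with exactly one active block contributes to at most $M_0$ of the threshold events, but those are the events with $m < M_0$, excluded from our sum. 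Summing over $m\geq M_0$ only the double-block events survive with a factor that I will absorb.

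Next, for a fixed pair $y\neq y'$ I would estimate $\mathbb{P}[\tr(P_y E(I)P_y)\geq 1 \text{ and } \tr(P_{y'} E(I)P_{y'})\geq 1]$. Using the spectral-shift / Birman–Schwinger style argument underlying the Wegner bound, $\mathbb{P}[\tr(P_y E_{H^\omega_{\lambda,L}}(I)P_y)\geq 1] \le \Ex_\omega[\tr(P_y E_{H^\omega_{\lambda,L}}(I)P_y)] \le \pi\|\rho\|_\infty|I|$ by~\eqref{WegnerEq1}–\eqref{WegnerEq3}. To decouple the pair, I would condition on all randomness except $\omega_y$, apply the single-site Wegner bound in the variable $\omega_y$ to control the first factor by $\pi\|\rho\|_\infty |I|$ uniformly in the other variables, then take expectation over the remaining variables of the second indicator and apply the Wegner bound again in $\omega_{y'}$, giving
\begin{equation*}
 \mathbb{P}[\tr(P_y E(I)P_y)\geq 1,\ \tr(P_{y'} E(I)P_{y'})\geq 1] \leq (\pi\|\rho\|_\infty|I|)^2.
\end{equation*}
Summing over all ordered pairs $y\neq y'$ in $J$ and using $|J|\leq|\Lambda_L|$ then yields $\sum_{m\geq M_0}\mathbb{P}[\tr(E(I))>m]\leq (\pi\|\rho\|_\infty|\Lambda_L||I|)^2$, as claimed.

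The step I expect to be the main obstacle is the decoupling: the spectral projection $E_{H^\omega_{\lambda,L}}(I)$ depends on \emph{all} the $\omega_y$ simultaneously, so "freezing everything but $\omega_y$" and controlling $\mathbb{P}[\tr(P_y E(I)P_y)\ge 1\mid \tilde\omega]$ requires that conditional on $\tilde\omega$ the map $\omega_y\mapsto P_y(H^\omega_{\lambda,L}-z)^{-1}P_y=(\omega_y I - A^\omega(z))^{-1}$ still has the Herglotz structure used in~\eqref{WegnerEq3}, with the eigenvalue imaginary parts bounded below; this is true because $A^\omega(z)$ is independent of $\omega_y$, so the conditional estimate is literally~\eqref{WegnerEq3} with the expectation taken only over $\omega_y$. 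The subtlety is purely bookkeeping: one must check that after conditioning, $\omega_{y'}$ is still independent of the (now $\tilde\omega$-measurable) operator $A^\omega(z)$ restricted to the $y'$-block — which holds since $y\ne y'$ and the $\omega$'s are i.i.d. — so the second application of Wegner goes through verbatim. Everything else is the combinatorial reduction to pairs, which is elementary once the "at least two active blocks" principle is stated precisely.
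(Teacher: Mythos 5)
Your proposal has three genuine gaps, of which the third is the one the paper's proof is actually built to overcome. First, the quantity $\tr(P_y E_{H^\omega_{\lambda,L}}(I)P_y)$ is \emph{not} an integer: $P_yE_{H^\omega_{\lambda,L}}(I)P_y$ is a positive contraction of rank at most $M_0$, so its trace is merely a real number in $[0,M_0]$ (for a rank-one spectral projection onto $\psi$ it equals $\|P_y\psi\|^2$). Consequently $\tr(E_{H^\omega_{\lambda,L}}(I))>m\geq M_0$ only forces at least two blocks to have \emph{nonzero} trace, not trace $\geq 1$, and your pointwise inclusion into pair events fails. Second, even granting that inclusion, $\sum_{m\geq M_0}\mathbb{P}[N>m]=\Ex[(N-M_0)\chi(N>M_0)]$ with $N=\tr(E_{H^\omega_{\lambda,L}}(I))$; bounding each term of the sum by the same $m$-independent sum of pair probabilities and then summing over $M_0\leq m<|\Lambda_L|$ produces an extra factor of order $|\Lambda_L|$ that cannot simply be ``absorbed''. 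The correct reduction is to the expectation of the product $\Ex[N(N-M_0)\chi(N>M_0)]$, not to a union bound over pairs.

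The third and decisive gap is the decoupling. The event $\{\tr(P_{y'}E_{H^\omega_{\lambda,L}}(I)P_{y'})\geq 1\}$ depends on $\omega_y$ through the full Hamiltonian, so you cannot ``apply the single-site Wegner bound in the variable $\omega_y$ to control the first factor'' and afterwards integrate the second factor: the two factors share the variable $\omega_y$, and the Herglotz structure of $A^\omega(z)$ (which concerns only the $y$-block) does not remove this dependence. The paper's proof supplies exactly the missing ingredient: the operator monotonicity $H^\omega_{\lambda,L}\leq H^{\tilde\omega}_L+(b+\lambda)P_y$ together with the rank bound $|\tr(E_{H^\omega_{\lambda,L}}(I))-\tr(E_{H^{\tilde\omega}_L+(b+\lambda)P_y}(I))|\leq M_0$ lets one dominate the second factor $N-M_0$ by $\int\tr(E_{H^{\tilde\omega}_L+(b+\lambda_y)P_y}(I))\rho(\lambda_y+a)\,d\lambda_y$, a quantity that is genuinely independent of $\omega_y$; only then can Wegner be applied twice, once in $\omega_y$ to the factor $\tr(P_yE_{H^\omega_{\lambda,L}}(I)P_y)$ and once, via \eqref{WegnerEq2}, to the decoupled majorant. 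This substitution is also the reason the estimate starts at $m\geq M_0$ and carries the subtraction of $M_0$. Without some version of this step your argument cannot produce the second factor of $|I|$.
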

\begin{proof}
With out loss of generality assume
$supp(\rho)\subseteq[a,b]$. Following the notations from the previous
lemma, for any $y\in J$, we have,
$$H^\omega_{\lambda,L}\leq H^{\tilde{\omega}}_L+(b+\lambda)P_y\qquad\forall \lambda>0.$$
Using
$$|\tr(E_{H^\omega_{\lambda,L}}(I))-\tr(E_{H^{\tilde{\omega}}_L+(b+\lambda)P_y}(I))|\leq M_0,$$
we have,
$$\tr(E_{H^{\omega}_L}(I))-M_0\leq \tr(E_{H^{\tilde{\omega}}_L+(b+\lambda)P_y}(I)),$$
and in particular,
$$\tr(E_{H^{\omega}_L}(I))-M_0\leq \int \tr(E_{H^{\tilde{\omega}}_L+(b+\lambda)P_y}(I))\rho(\lambda+a)d\lambda.$$
So, we can use the above and get,
\begin{align*}
&\sum_{m\geq M_0}\mathbb{P}[\tr(E_{H^\omega_{\lambda,L}}(I))>m]\\
&\leq \Ex_\omega\left[\tr(E_{H^\omega_{\lambda,L}}(I))(\tr(E_{H^\omega_{\lambda,L}}(I))-M_0)\chi(\tr(E_{H^\omega_{\lambda,L}}(I))>M_0)\right]\\
&=\sum_{y\in J}\Ex_\omega\left[\tr(P_y E_{H^\omega_{\lambda,L}}(I)P_y)(\tr(E_{H^\omega_{\lambda,L}}(I))-M_0)\chi(\tr(E_{H^\omega_{\lambda,L}}(I))>M_0)\right]\\
&\leq \sum_{y\in J}\Ex_\omega\left[\tr(P_y E_{H^\omega_{\lambda,L}}(I)P_y)\tr(E_{H^{\tilde{\omega}}_L+(b+\lambda_y)P_y}(I))\chi(\tr(E_{H^\omega_{\lambda,L}}(I))>M_0)\right]\\
&\leq \sum_{y\in J}\Ex_\omega\left[\tr(P_y E_{H^\omega_{\lambda,L}}(I)P_y)\int \tr(E_{H^{\tilde{\omega}}_L+(b+\lambda_y)P_y}(I))\rho(\lambda_y+a)d\lambda_y\right]\\
  \begin{split}
   &\leq \sum_{y\in J}\Ex_{\tilde{\omega}}\biggl[\left(\int
        \tr(E_{H^{\tilde{\omega}}_L+(b+\lambda_y)P_y}(I))\rho(\lambda_y+a)d\lambda_y\right) \biggr.\\
      &\hspace{3cm}  \bigg.\left(\int_a^b \tr(P_y E_{H^{\tilde{\omega}}_L+x P_y}(I)P_y)\rho(x)dx\right)
    \bigg]
  \end{split}\\
&\leq \pi\norm{\rho}_\infty m_0 |I|\sum_{y\in J}\Ex_{\tilde{\omega}}\left[\int \tr(E_{H^{\tilde{\omega}}_L+(b+\lambda_y)P_y}(I))\rho(\lambda_y+a)d\lambda_y\right].
\end{align*}
So, taking $\{\lambda_y\}_{y\in J}$ to be i.i.d random variables
following the distribution $\rho(x+a)dx$, independent of
$\{\omega_y\}_{y\in J}$, we can use the Wegner
estimate~\eqref{WegnerEq2}, to get~\eqref{MinamiEq1}.

\end{proof}
\section{Results}

\subsection{Proof of Theorem~\ref{LocResThm}}
To prove the theorem we will use the expression~\eqref{GreenFunEq2}. Notice that, in that expression, $\Gamma_{x_{n_i+1},x_{n_{i+1}}}$ is independent of the random variables $\{\omega_{p_j}\}_{j=1}^{i-1}$. So we have,
\begin{align*}
  \Ex_\omega\left[\left|\dprod{\delta_x}{(H^\omega_{\lambda,L}-z)^{-1}\delta_y}\right|^s\right]&=\\
&\hspace{-1.8cm}\Ex_{\omega_{p_0}^{\perp},\cdots,\omega_{p_m}^{\perp}}\left[\Ex_{\omega_{p_0}}\left[|\Gamma_{x_{n_0+1},x_{n_{1}}}|^s\Ex_{\omega_{p_1}}\left[\cdots\Ex_{\omega_{p_m}}\left[|\Gamma_{x_{n_m+1},x_{n_{m+1}}}|^s\right]\right] \right]\right].
\end{align*}
Therefore, all we need to do is to estimate $\Ex_{\omega_{p_i}}\left[|\Gamma_{x_{n_i+1},x_{n_{i+1}}}|^s\right]$ independent of $\{\omega_n\}_{n\neq p_i}$. Let $\{E_j^{\omega}(z)\}_{j=1}^{rank(P_{p_i})}$, counted with multiplicity, denote the eigenvalues of 
$$P_{p_i}\Delta P_{p_i}-P_{p_i}\Delta\left(\chi_{\Lambda_L}-\sum_{j=1}^i P_{p_j}\right)(\tilde{H}^\omega_{i,L}-z)^{-1}\left(\chi_{\Lambda_L}-\sum_{j=1}^iP_{p_j}\right)\Delta P_{p_i}.$$
Then, by the definition of $\Gamma$ (see~\eqref{GreenFunEq4}), we have,
$$|\Gamma_{x_{n_i+1},x_{n_{i+1}}}|\leq
\sum_{j=1}^{rank(P_{p_i})}\frac{1}{|E^\omega_i(z)-\lambda\omega_{p_i}-z|}\
.$$
Hence,
\begin{align*}
 \Ex_{\omega_{p_i}}\left[|\Gamma_{x_{n_i+1},x_{n_{i+1}}}|^s\right]&\leq \Ex_{\omega_{p_i}}\Biggl[ \sum_{j=1}^{rank(P_{p_i})}\frac{1}{|E^\omega_i(z)-\lambda\omega_{p_i}-z|^s}\Biggr]\\
 &\leq C\frac{|\Lambda^\prime_{m_0}(p_i)|}{\lambda^s}.
\end{align*}
Therefore, for large enough $\lambda$,  $C\frac{|\Lambda^\prime_{m_0}(p_i)|}{\lambda^s}<1$. So, using
\begin{equation}\label{expResEq1}
 \Ex_\omega\left[\left|\dprod{\delta_x}{(H^\omega_{\lambda,L}-z)^{-1}\delta_y}\right|^s\right]\leq \left(C\frac{|\Lambda^\prime_{m_0}(p_i)|}{\lambda^s}\right)^m,
\end{equation}
we get the estimate~\eqref{exp}, proving the theorem.

\subsection{Proof of Theorem~\ref{ThmDosCanopy}}
To show~\eqref{DosConvEq1}, it is enough to show 
$$\lim_{L\rightarrow\infty} \frac{1}{|\Lambda_L|}\tr(f(H^\omega_{\lambda,L}))=\int f(x)dn_{\mathcal{C},\lambda}(x),$$
for $f\in C_0(\RR)$. (For us it is enough to show the above result for functions in $C_c(\RR)$; but since $C_c(\RR)$ is contained in $C_0(\RR)$, clearly this suffices.)  Notice that
$$\frac{1}{|\Lambda_L|}\tr(f(H^\omega_{\lambda,L}))=\frac{1}{|\Lambda_L|}\sum_{r=0}^L\sum_{\dist(0,x)=L-r}\dprod{\delta_x}{f(H^\omega_{\lambda,L})\delta_x},$$
which can be written as
\begin{align*}
\begin{split}
&\frac{1}{|\Lambda_L|}\tr(f(H^\omega_{\lambda,L}))= \\ & \hspace{1.5cm}\frac{1}{1+(K+1)\frac{K^L-1}{K-1}}\biggl(\dprod{\delta_0}{f(H^\omega_{\lambda,L})\delta_0}+\sum_{r=0}^{L-1}
    (K+1)K^{L-r-1} C^\omega_{L,r}(f)\biggr),
\end{split}
\end{align*}
where
$$C^\omega_{L,r}(f):=\frac{1}{(K+1)K^{L-r-1}}\sum_{\dist(0,x)=L-r}\dprod{\delta_x}{f(H^\omega_{\lambda,L})\delta_x},$$
that is, the sum is done over the set of vertices  which are at a
distance  $r$ from the boundary. Notice that $|C^\omega_{L,r}(f)|\leq \|f\|_\infty$. Hence, It is enough to show 
$$\lim_{L\rightarrow\infty }C^\omega_{L,r}(f)=\Ex_\omega[\dprod{\delta_{x_r}}{f(H^\omega_{\mathcal{C},\lambda})\delta_{x_r}}],$$
where $x_r\in V_{\mathcal{C}}$ is such that $\dist(x_r,\partial \mathcal{C})=r$. Since $\Im(\cdot-z)^{-1}$ for $z\in\CC^{+}$ are dense in $C_0(\RR)$, to prove the above expression, it is enough to show 
$$\lim_{L\rightarrow\infty }C^\omega_{L,r}(\Im(\cdot-z)^{-1})=\Ex_\omega[\Im \dprod{\delta_{x_r}}{(H^\omega_{\mathcal{C},\lambda}-z)^{-1}\delta_{x_r}}].$$
For $L_0\in\NN$, define
$$S_{L-r-L_0}:=\{x\in\Lambda_L: \dist(x,\Lambda_{L-r-L_0}(0))=1\},$$
and for $y\in S_{L-r-L_0}$, set
$$\Lambda_{L,y}:=\{x\in\Lambda_L:y\prec x\}.$$
Then, using the resolvent identity with $H^\omega_{\lambda,L}$ and
$$\chi_{\Lambda_{L-r-L_0}(0)} H^\omega_{\lambda,L} \chi_{\Lambda_{L-r-L_0}(0)}+\sum_{x\in S_{L-r-L_0}} \chi_{\Lambda_{L,x}} H^\omega_{\lambda,L} \chi_{\Lambda_{L,x}},$$
for $x\in\Lambda_L$ such that $\dist(x,0)=L-r$, we get,
\begin{align*}
&\dprod{\delta_{x}}{(H^\omega_{\lambda,L}-z)^{-1}\delta_{x}}\\
&=\dprod{\delta_{x}}{(H^\omega_{\lambda,L,A_x}-z)^{-1}\delta_{x}}-\dprod{\delta_x}{(H^\omega_{\lambda,L}-z)^{-1}\delta_{PA_x}}\dprod{\delta_{A_x}}{(H^\omega_{\lambda,L,A_x}-z)^{-1}\delta_x},
\end{align*}
where $A_x$ is the unique  vertex in $S_{L-r-L_0}$ such that $x\in\Lambda_{L,A_x}$, $H^\omega_{\lambda,L,x}:=\chi_{\Lambda_{L,x}} H^\omega_{\lambda,L} \chi_{\Lambda_{L,x}}$ and $PA_x\in\Lambda_{L-r-L_0}$ such that $\dist(A_x,PA_x)=1$. 
We will choose $L_0$ so that $H^\omega_{\lambda,L,x}$ are i.i.d for each $x\in S_{L-r-L_0}$. Hence,
\begin{align*}
\biggl|(K+1)K^{L-r-1} C^\omega_{L,r}((\cdot-z)^{-1}) -\sum_{y\in S_{L-r-L_0}}\sum_{\substack{x\in\Lambda_{L,x}\\ \dist(x,y)=L_0-1}} \dprod{\delta_{x}}{(H^\omega_{\lambda,L,y}-z)^{-1}\delta_{x}}\biggr|\\
=\bigg|\sum_{y\in S_{L-r-L_0}}\sum_{\substack{x\in\Lambda_{L,x}\\ \dist(x,y)=L_0-1}}   \dprod{\delta_x}{(H^\omega_{\lambda,L}-z)^{-1}\delta_{Py}}\dprod{\delta_{y}}{(H^\omega_{\lambda,L,y}-z)^{-1}\delta_x}\bigg|.
\end{align*}
Next, using the estimate $|\dprod{\delta_x}{(H^\omega_{\lambda,L}-z)^{-1}\delta_y}|\leq \frac{1}{\Im z}$ we get,

\begin{align}\label{pf11Eq1}
  \begin{aligned}
    \bigg|C^\omega_{L,r}((\cdot-z)^{-1})&-\\
      &\hspace{-0.8in}\biggl.\frac{1}{(K+1)K^{L-r-L_0}}\sum_{y\in S_{L-r-L_0}}\frac{1}{K^{L_0-1}}\sum_{\substack{x\in\Lambda_{L,y}\\
          \dist(x,y)=L_0-1}}
      \dprod{\delta_{x}}{(H^\omega_{\lambda,L,y}-z)^{-1}\delta_{x}}\biggr| 
  \end{aligned}
\nonumber\\
  \leq \biggl(\frac{1}{(\Im z)^{2-s}}\frac{1}{(K+1)K^{L-r-L_0}}\biggr)\hspace{3in}\nonumber\\
  \sum_{y\in S_{L-r-L_0}}\frac{1}{K^{L_0-1}}\sum_{\substack{x\in\Lambda_{L,x}\\ \dist(x,y)=L_0-1}} |\dprod{\delta_{y}}{(H^\omega_{\lambda,L,y}-z)^{-1}\delta_x}|^s.
\end{align}

Now, observe that, the graphs $\Lambda_{L,y}$ are isomorphic to $\tilde{\Lambda}_{L_0+r-1}(\tilde{y})$, for any $\tilde{y}\in\mathcal{C}$ such that $\dist(\partial\mathcal{C},\tilde{y})=L_0+r-1$. 
So, using the independence of $H^\omega_{\lambda,L,y}$ and viewing them as cut-off operator for $H^\omega_{\mathcal{C},\lambda}$, we have
\begin{align*}
\lim_{L\rightarrow\infty}\frac{1}{|S_{L-r-L_0}|}\sum_{y\in S_{L-r-L_0}} \frac{1}{K^{L_0-1}}\sum_{\substack{x\in\Lambda_{L,y}\\ \dist(x,y)=L_0-1}}\dprod{\delta_x}{(H^\omega_{\lambda,L,y}-z)^{-1}\delta_x}\\
=\frac{1}{K^{L_0-1}}\sum_{\substack{x\in\tilde{\Lambda}_{L_0+r-1}(\tilde{y})\\ \dist(x,y)=L_0-1}}\Ex_\omega\left[ \dprod{\delta_x}{(\chi_{\tilde{\Lambda}_{L_0+r-1}(\tilde{y})}H^\omega_{\mathcal{C},\lambda} \chi_{\tilde{\Lambda}_{L_0+r-1}(\tilde{y})} -z)^{-1}\delta_x}\right],
\end{align*}
which follows through the first ergodic theorem, which gives almost sure convergence. Finally, using the symmetry of the tree $\tilde{\Lambda}_{L_0+r-1}(\tilde{y})$, we conclude that
\begin{align*}
 \frac{1}{K^{L_0-1}}\sum_{\substack{x\in\tilde{\Lambda}_{L_0+r-1}(\tilde{y})\\ \dist(x,y)=L_0-1}}\Ex_\omega\left[ \dprod{\delta_x}{(\chi_{\tilde{\Lambda}_{L_0+r-1}(\tilde{y})}H^\omega_{\mathcal{C},\lambda} \chi_{\tilde{\Lambda}_{L_0+r-1}(\tilde{y})} -z)^{-1}\delta_x}\right]\\
=\Ex_\omega\left[\dprod{\delta_{x_r}}{(\chi_{\tilde{\Lambda}_{L_0+r-1}(\tilde{y})}H^\omega_{\mathcal{C},\lambda} \chi_{\tilde{\Lambda}_{L_0+r-1}(\tilde{y})} -z)^{-1}\delta_{x_r}}\right],
\end{align*}
for some $x_r\in \tilde{\Lambda}_{L_0+r-1}(\tilde{y})$ such that
$\dist(\partial\mathcal{C},x_r)=r$. Using this in~\eqref{pf11Eq1}, and using~\eqref{exp}, we have,
\begin{align*}
 &\left|C^\omega_{L,r}((\cdot-z)^{-1})-\Ex_\omega\left[\dprod{\delta_{x_r}}{(\chi_{\tilde{\Lambda}_{L_0+r-1}(\tilde{y})}H^\omega_{\mathcal{C},\lambda} \chi_{\tilde{\Lambda}_{L_0+r-1}(\tilde{y})} -z)^{-1}\delta_{x_r}}\right]\right|\\
 &\qquad\leq \frac{1}{(\Im z)^{2-s}}\Ex\left[\left|\dprod{\delta_{\tilde{y}}}{(\chi_{\tilde{\Lambda}_{L_0+r-1}(y)}H^\omega_{\mathcal{C},\lambda}\chi_{\tilde{\Lambda}_{L_0+r-1}(\tilde{y})}-z)^{-1}\delta_{x_r}}\right|^s\right]\\
 &\qquad\leq \frac{1}{(\Im z)^{2-s}}C e^{-\gamma L_0}.
\end{align*}
Next, using the resolvent identity between $H^\omega_{\mathcal{C},\lambda}$ and $\chi_{\tilde{\Lambda}_{L_0+r-1}(\tilde{y})}H^\omega_{\mathcal{C},\lambda} \chi_{\tilde{\Lambda}_{L_0+r-1}(\tilde{y})}$ at $x_r$ and using~\eqref{exp}, we have
\begin{align*}
\left|\Ex_\omega\left[\dprod{\delta_{x_r}}{(\chi_{\tilde{\Lambda}_{L_0+r-1}(\tilde{y})}H^\omega_{\mathcal{C},\lambda} \chi_{\tilde{\Lambda}_{L_0+r-1}(\tilde{y})} -z)^{-1}\delta_{x_r}}-\dprod{x_r}{(H^\omega_{\mathcal{C},\lambda}-z)^{-1}\delta_{x_r}}\right]\right|\\
\leq \Ex\left[\left|\dprod{\delta_{x_r}}{(\chi_{\tilde{\Lambda}_{L_0+r-1}(\tilde{y})}H^\omega_{\mathcal{C},\lambda} \chi_{\tilde{\Lambda}_{L_0+r-1}(\tilde{y})} -z)^{-1}\delta_{\tilde{y}}}\dprod{\delta_{P\tilde{y}}}{(H^\omega_{\mathcal{C},\lambda}-z)^{-1}\delta_{x_r}}\right|\right]\\
\leq \frac{1}{(\Im z)^{2-s}}\Ex\left[\left|\dprod{\delta_{P\tilde{y}}}{(H^\omega_{\mathcal{C},\lambda}-z)^{-1}\delta_{x_r}}\right|^s\right]\leq \frac{C}{(\Im z)^{2-s}}e^{-\gamma L_0}.
\end{align*}
Combining the above results, and letting $L_0\rightarrow\infty$, we get
\begin{align*}
 \lim_{L\rightarrow\infty }C^\omega_{L,r}((\cdot-z)^{-1})=\Ex_\omega\left[\dprod{\delta_{x}}{(H^\omega_{\mathcal{C},\lambda}-z)^{-1}\delta_{x}}\right].
\end{align*}
This completes the proof of the theorem.
\subsection{Infinite divisibility and Compound Poisson Variables}
To show the infinite divisibility of the sequence of measures
$\{\mu^{\omega,\lambda}_{E_0,L}\}_L$, first, define the measures $\eta^{\omega,\lambda}_{E_0,L,x}$ for $x\in\Lambda_L$  as 
\begin{equation}\label{defInfDivMeaEq1}
 \eta^{\omega,\lambda}_{E_0,L,x}(f):=\tr(f(|\Lambda_L|(\chi_{\Lambda^\prime_{L-N}(x)}H^\omega_{\lambda,L} \chi_{\Lambda^\prime_{L-N}(x)}-E_0))),~ f\in C_c(\RR),
\end{equation}
where $N=\dist(0,x)$.

The following lemma says that the processes $\mu_{E_0,L}^{\omega,\lambda}$ and $\sum_{x: \dist(0,x) = l_L} \eta_{E_0, L,x}^{\omega,\lambda}$ have the same set of limit points in the topology of distributional convergence.  We have used the Fourier transform charactarization of the distributional convergence.
\begin{lem}\label{infDivLem1}
For $0<\alpha<\frac{1}{2}$, set $l_L=m_0\left\lfloor \frac{\alpha L}{m_0}\right\rfloor$. Then for $f\in C_c(\RR)$,
\begin{equation}\label{infDivResEq1}
\lim_{L\rightarrow\infty}\Ex_\omega\left[\left|e^{\iota\mu^{\omega,\lambda}_{E_0,L}(f)}-e^{\iota\sum_{x:\dist(0,x)=l_L}\eta^{\omega,\lambda}_{E_0,L,x}(f)}\right|\right]=0.
\end{equation}
\end{lem}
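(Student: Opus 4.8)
The plan is to compare the operator $H^\omega_{\lambda,L}$ with the ``decoupled'' operator obtained by deleting all edges between the layer at distance $l_L$ from the root (equivalently, between $\Lambda_{L-l_L}$ and the sub-trees hanging below it), so that the latter decomposes as a direct sum $\bigoplus_{x:\dist(0,x)=l_L} \chi_{\Lambda'_{L-l_L}(x)} H^\omega_{\lambda,L} \chi_{\Lambda'_{L-l_L}(x)}$ plus a piece supported on $\Lambda_{l_L}$ near the root, whose dimension is $O(K^{l_L}) = O(K^{\alpha L})$. Since $|\Lambda_L| \asymp K^L$, this central piece contributes a term whose trace is at most $O(K^{\alpha L})$ points; because $\alpha < 1/2 < 1$ this is $o(|\Lambda_L|) \cdot$ (nothing) --- wait, one must be careful: the number of points is not automatically negligible after scaling by $|\Lambda_L|$. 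The honest route is via the resolvent expansion already set up in~\eqref{GreenFunEq2}: write $e^{\iota \mu^{\omega,\lambda}_{E_0,L}(f)} - e^{\iota \sum_x \eta^{\omega,\lambda}_{E_0,L,x}(f)}$ and bound its expectation by $\Ex_\omega |\mu^{\omega,\lambda}_{E_0,L}(f) - \sum_x \eta^{\omega,\lambda}_{E_0,L,x}(f)|$ using $|e^{\iota a}-e^{\iota b}| \leq |a-b|$. Since $l_L \in m_0\NN$, the cut is along a set of vertices none of which lies strictly inside any $\Lambda'_{m_0}(p)$, so the decoupled operator is exactly of the form $\tilde H^\omega$-type and the resolvent identity~\eqref{HertlotzMatEq2}--\eqref{GreenFunEq2} applies cleanly across the cut.

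Concretely, I would first reduce, as in the proof of Theorem~\ref{ThmDosCanopy}, to test functions of the form $f = \Im(\cdot - z)^{-1}$ with $z \in \CC^+$ (these are dense in $C_0$, hence in $C_c$ for the relevant convergence), so that $\mu^{\omega,\lambda}_{E_0,L}(f)$ and $\eta^{\omega,\lambda}_{E_0,L,x}(f)$ become $|\Lambda_L| \Im \tr\!\big((H^\omega_{\lambda,L}-E_0 - \tfrac{\iota}{|\Lambda_L|} \Im z \cdots)^{-1}\big)$-type quantities --- more precisely, setting $\zeta = E_0 + z/|\Lambda_L|$, we get $\mu^{\omega,\lambda}_{E_0,L}(f) = \Im \tr (H^\omega_{\lambda,L} - \zeta)^{-1}$ and similarly for $\eta$ with the decoupled operator. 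Then the difference is $\Im \tr\big[(H^\omega_{\lambda,L}-\zeta)^{-1} - (H^{\mathrm{dec},\omega}_{\lambda,L}-\zeta)^{-1}\big]$, and the second resolvent identity together with~\eqref{GreenFunEq2} expresses each diagonal entry of the difference as a sum over pairs (vertex below the cut, vertex above) of products of $\Gamma$-factors; the key point is that any such product crossing the cut picks up at least $\lfloor l_L/m_0 \rfloor \asymp \alpha L/m_0$ factors $\Gamma$, each with fractional moment $\leq C|\Lambda'_{m_0}(p)|/\lambda^s < 1$.

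The main estimate is therefore: $\Ex_\omega |\mu^{\omega,\lambda}_{E_0,L}(f) - \sum_x \eta^{\omega,\lambda}_{E_0,L,x}(f)| \leq |\Lambda_L| \cdot \frac{1}{(\Im z/|\Lambda_L|)^{2-s}} \cdot (\text{number of relevant vertex pairs}) \cdot C e^{-\gamma l_L}$. The number of pairs and the polynomial-in-$|\Lambda_L|$ factors from the small imaginary part are all bounded by $C|\Lambda_L|^{K'}$ for some fixed power $K'$, i.e.\ by $C e^{K' L \log K}$, whereas $e^{-\gamma l_L} = e^{-\gamma m_0 \lfloor \alpha L/m_0\rfloor} \leq C e^{-\gamma \alpha L/2}$ (say). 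Here is where Theorem~\ref{LocResThm} does the decisive work: for \emph{fixed} $\alpha$, choosing $\lambda > \lambda_{\gamma,s}$ with $\gamma$ large enough (specifically $\gamma > 2 K' \log K/\alpha$) forces the product to tend to $0$ as $L \to \infty$. I expect the main obstacle to be bookkeeping the polynomial loss from $(\Im z / |\Lambda_L|)^{-(2-s)}$ against the exponential gain $e^{-\gamma l_L}$: one must verify that the exponent $K'$ (counting the combinatorial number of crossing paths times the power of $|\Lambda_L|$ from Stone's-formula / resolvent bounds) is bounded by a \emph{constant independent of $\gamma$ and $\lambda$}, so that the freedom to take $\gamma$ arbitrarily large genuinely wins. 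This is exactly the phenomenon the introduction flags --- that $|\partial\Lambda_L|/|\Lambda_L| \not\to 0$ is harmless here because arbitrary exponential decay is available --- and making that heuristic rigorous is the crux of the argument.
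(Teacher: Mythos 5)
Your skeleton matches the paper's: reduce via $|e^{\iota a}-e^{\iota b}|\leq|a-b|$ and density of $\Im(\cdot-z)^{-1}$, decouple along the layer $\dist(0,x)=l_L$, and use the arbitrarily fast exponential decay of Theorem~\ref{LocResThm} to beat the polynomial loss from $\Im z_L=|\Lambda_L|^{-1}\Im z$. But there is a genuine gap: your final estimate, ``$|\Lambda_L|\cdot(\Im z/|\Lambda_L|)^{-(2-s)}\cdot(\text{number of pairs})\cdot Ce^{-\gamma l_L}$,'' is simply not available for two of the three contributions to $\mu^{\omega,\lambda}_{E_0,L}-\sum_x\eta^{\omega,\lambda}_{E_0,L,x}$. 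First, the vertices $y$ with $\dist(0,y)<l_L$ appear in $\mu$ but in \emph{no} $\eta_x$; their contribution $|\Lambda_L|^{-1}\sum_y\Im G_L(y,y;z_L)$ is not a difference of resolvents, contains no crossing edge, and carries no factor $e^{-\gamma l_L}$ whatsoever. You half-notice this (``wait, one must be careful'') but then declare the resolvent expansion to be ``the honest route'' and never return to it --- the resolvent expansion gives you nothing here. The paper handles these sites by averaging first: the Wegner-type bound~\eqref{WegnerEq3} gives $\Ex[\Im G_L(y,y;z_L)]\leq\pi\norm{\rho}_\infty$ \emph{uniformly in the imaginary part}, so the count $O(K^{\alpha L})$ against $|\Lambda_L|^{-1}\sim K^{-L}$ kills this term with no decay needed. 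Second, for $y\in\Lambda'_{L-l_L}(x)$ with $\dist(x,y)<l_L$, the cross term $G_L(y,Px)G_{L,x}(x,y)$ produced by the resolvent identity has too few $\Gamma$-factors to decay (e.g.\ $y=x$ gives none), while the deterministic bound $(|\Lambda_L|/\Im z)^{2-s}$ times the count $K^{2\alpha L}$ blows up. Again the paper does not use the resolvent identity there: it bounds $\Ex[\Im G_L(y,y)]+\Ex[\Im G_{L,x}(y,y)]\leq 2\pi\norm{\rho}_\infty$ and wins purely by counting, $K^{2\alpha L}/K^L\to0$ --- this is precisely where the hypothesis $\alpha<\tfrac12$ enters, which your argument never uses and cannot explain.

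Only the third contribution, $y$ with $\dist(x,y)\geq l_L$, works the way you describe, and there your accounting is essentially the paper's: one factor $|G_L(y,Px)|\leq|\Lambda_L|/\Im z$, a fractional moment $\Ex[|G_{L,x}(x,y)|^s]\leq(C/\lambda^s)^{\dist(x,y)/m_0}$, and a largeness condition on $\lambda$ depending on $\alpha$, $s$, $m_0$, $K$ (the paper's $\frac{m_0((1-s+2\alpha)\ln K+\alpha\tilde C)}{s\alpha}<\ln\lambda$). A small additional slip: for $f=\Im(\cdot-z)^{-1}$ one has $\mu^{\omega,\lambda}_{E_0,L}(f)=|\Lambda_L|^{-1}\Im\tr(H^\omega_{\lambda,L}-z_L)^{-1}$, not $\Im\tr(H^\omega_{\lambda,L}-z_L)^{-1}$; the missing $|\Lambda_L|^{-1}$ is exactly the factor that makes the two counting arguments above close.
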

\begin{proof}
Using the decomposition $f=f_1-f_2+\iota (f_3-f_4)$ where $f_i\in
C_c(\RR)$ and positive, it is enough to show~\eqref{infDivResEq1} for
non-negetive functions. Therefore, we will assume  $f$ is a non-negetive function. Next, using the inequality $|e^{\iota x}-1|\leq |x|$, (pulling out one of the terms inside the modulus leaves us
with an expression of the form $ |e^{ix} -1| $) we only have to show
$$\lim_{L\rightarrow\infty}\Ex_\omega\Bigl[\Bigl|\mu^{\omega,\lambda}_{E_0,L}(f)-\sum_{x:\dist(0,x)=l_L}\eta^{\omega,\lambda}_{E_0,L,x}(f)\Bigr|\Bigr]=0.$$
Finally, since the functions $\Im(\cdot-z)^{-1}$ for $z\in\CC^{+}$ are in $C_0(\RR)$, it is enough to show  
\begin{equation}\label{infDivEstEq2}
\lim_{L\rightarrow\infty}\Ex_\omega\Bigl[\Bigl|\mu^{\omega,\lambda}_{E_0,L}(\Im(\cdot-z)^{-1})-\sum_{x:\dist(0,x)=l_L}\eta^{\omega,\lambda}_{E_0,L,x}(\Im(\cdot-z)^{-1})\Big|\Big]=0.
\end{equation}
 Hence, (denote $H^\omega_{\lambda,L,x}:=\chi_{\Lambda^\prime_{L-N}(x)}H^\omega_{\lambda,L} \chi_{\Lambda^\prime_{L-N}(x)}$)
\begin{align*}
&\Bigl|\mu^{\omega,\lambda}_{E_0,L}(\Im(\cdot-z)^{-1})-\sum_{x:\dist(0,x)=l_L}\eta^{\omega,\lambda}_{E_0,L,x}(\Im(\cdot-z)^{-1})\Bigr|\\
&=\frac{1}{|\Lambda_L|}\biggl|\sum_{y\in\Lambda_L}\Im \dprod{\delta_y}{(H^\omega_{\lambda,L}-E_0-|\Lambda_L|^{-1}z)^{-1}\delta_y}\Bigr.\\
&\qquad\qquad\qquad\biggl.-\sum_{x:\dist(0,x)=l_L}\sum_{y\in \Lambda_L^\prime(x)} \Im \dprod{\delta_y}{(H^\omega_{\lambda,L,x}-E_0-|\Lambda_L|^{-1}z)^{-1}\delta_y}\biggr|\\
&\leq \frac{1}{|\Lambda_L|}\sum_{\dist(0,x)<l_L}\Im G_L(y,y;z_L)\\
&\qquad +\frac{1}{|\Lambda_L|}\sum_{x:\dist(0,x)=l_L}\sum_{\substack{y\in \Lambda_L^\prime(x)\\ \dist(x,y)<l_L}} \Im G_L(y,y;z_L)+\Im G_{L,x}(y,y;z_L)\\
&\qquad +\frac{1}{|\Lambda_L|}\sum_{x:\dist(0,x)=l_L}\sum_{\substack{y\in \Lambda_L^\prime(x)\\ \dist(x,y)\geq l_L}} |G_L(y,y;z_L)-G_{L,x}(y,y;z_L)|,
\end{align*}
where $G_L(x,y;z_L)=\dprod{\delta_x}{(H^\omega_{\lambda,L}-E_0-|\Lambda_L|^{-1}z)^{-1}\delta_y}$ and $G_{L,x}(x,y;z_L)=\\ \langle\delta_x,(H^\omega_{\lambda,L,x}-E_0-|\Lambda_L|^{-1}z)^{-1}\delta_y\rangle$. Using~\eqref{WegnerEq3} for the first and the second sums, we get
\begin{align*}
&\frac{1}{|\Lambda_L|}\sum_{\dist(0,x)<l_L}\Ex_\omega\left[\Im G_L(y,y;z_L)\right]\leq \pi \norm{\rho}_\infty \frac{1+\frac{K+1}{K-1}(K^{l_L}-1)}{1+\frac{K+1}{K-1}(K^{L+1}-1)}\\
&\qquad\qquad\qquad\qquad\qquad\qquad\qquad\qquad=O(K^{-(1-\alpha)L})\xrightarrow{L\rightarrow\infty}0.\\
&\frac{1}{|\Lambda_L|}\sum_{x:\dist(0,x)=l_L}\sum_{\substack{y\in \Lambda_L^\prime(x)\\ \dist(x,y)<l_L}} \Ex_\omega\left[\Im G_L(y,y;z_L)+\Im G_{L,x}(y,y;z_L)\right]\\
&\qquad\qquad\leq 2\pi \norm{\rho}_\infty \frac{(K+1)K^{l_L}\frac{K^{l_L}-1}{K-1}}{1+\frac{K+1}{K-1}(K^{L+1}-1)}=O(K^{(1-2\alpha)L})\xrightarrow{L\rightarrow\infty}0.
\end{align*}
For the third term, we use the resolvent equation, and get ($Px$
denotes the neighboring vertex such that $\dist(0,x)=\dist(0,Px)+1$;
i.e., the vertex previous to $x$.)
\begin{align}
&\frac{1}{|\Lambda_L|}\sum_{x:\dist(0,x)=l_L}\sum_{\substack{y\in \Lambda_L^\prime(x)\\ \dist(x,y)\geq l_L}} \Ex[|G_L(y,y;z_L)-G_{L,x}(y,y;z_L)|]\nonumber\\
&\qquad= \frac{1}{|\Lambda_L|}\sum_{x:\dist(0,x)=l_L}\sum_{\substack{y\in \Lambda_L^\prime(x)\\ \dist(x,y)\geq l_L}} \Ex[|G_L(y,Px;z_L)G_{L,x}(x,y;z_L)|]\nonumber\\
&\qquad\leq \frac{1}{|\Lambda_L|}\sum_{x:\dist(0,x)=l_L}\sum_{\substack{y\in \Lambda_L^\prime(x)\\ \dist(x,y)\geq l_L}} \frac{1}{(|\Lambda_L|^{-1}\Im z)^{2-s}}\Ex[|G_{L,x}(x,y;z_L)|^s]\nonumber\\
&\qquad\leq \frac{|\Lambda_L|^{1-s}(K+1)K^{l_L-1}}{(\Im z)^{2-s}} \sum_{n=l_L}^\infty K^n e^{n(\tilde{C}-s\frac{1}{m_0}\ln\lambda)}\nonumber\\
&\qquad=O\left(e^{(1-s+2\alpha)\ln K L+\alpha L(\tilde{C}-s\frac{1}{m_0}\ln \lambda)}\right)\label{infDivEstEq1},
\end{align}
where we used the fact that $|G_L(x,y;z_L)|\leq
\frac{1}{|\Lambda_L|^{-1}\Im z}$, and the last expression comes from the proof of Theorem~\ref{LocResThm} (see~\eqref{expResEq1}). For
$$\frac{m_0((1-s+2\alpha)\ln K+\alpha \tilde{C})}{s\alpha}< \ln\lambda, $$
observe that~\eqref{infDivEstEq1} goes  to zero as $L\rightarrow\infty$. This completes the proof of~\eqref{infDivEstEq2}, and hence the lemma.
\end{proof}
Before attempting to prove the main result, we need to establish some results on the limit of the process. 
\begin{lem}\label{limMeaLem}
Let $L_n=m_0n$ for $n\in\NN$. Then, for any bounded interval $I$, 
\begin{equation}\label{limMeaEq1}
\lim_{n\rightarrow\infty}\Ex_\omega[\mu^{\omega,\lambda}_{E_0,L_n}(\chi_I)]=Kn_{\mathcal{C},\lambda}(E_0)|I|.
\end{equation}
Given any bounded interval $I$, there exists sub-sequence $\{\tilde{L}_n\}_n$ of $\{L_n\}_n$, such that $\{\sum_{\dist(0,x)=l_{\tilde{L}_m}}\mathbb{P}[\eta^{\omega,\lambda}_{E_0,\tilde{L}_{m},x}(\chi_I)=k]\}_{m\in\NN}$ converges.
Here $l_L$ is the sequence defined in Lemma~\ref{infDivLem1}.
\end{lem}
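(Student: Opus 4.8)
For the first assertion, write $\Ex_\omega[\mu^{\omega,\lambda}_{E_0,L}(\chi_I)]=\Ex_\omega[\tr(E_{H^\omega_{\lambda,L}}(J_L))]$ with the shrinking interval $J_L:=E_0+|\Lambda_L|^{-1}I$, $|J_L|=|\Lambda_L|^{-1}|I|$, and reduce to the $\eta$'s via Lemma~\ref{infDivLem1}: that lemma is stated for $C_c(\RR)$, but approximating $\chi_I$ by Poisson kernels $\Im(\cdot-z)^{-1}$, $z\in\CC^+$, with error equal to the expected number of eigenvalues in a thin collar of $J_L$ --- which by~\eqref{WegnerEq2} is $\le C|\Lambda_L|\cdot(\text{collar width})$ and hence negligible uniformly in $L$ --- gives $\Ex_\omega[\mu^{\omega,\lambda}_{E_0,L}(\chi_I)]=\Ex_\omega[\sum_{\dist(0,x)=l_L}\eta^{\omega,\lambda}_{E_0,L,x}(\chi_I)]+o(1)$. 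The $\eta^{\omega,\lambda}_{E_0,L,x}$ are identically distributed over the $(K+1)K^{l_L-1}$ vertices $x$ in that layer, each being the $|\Lambda_L|$-rescaled eigenvalue count of $H^\omega_{\lambda,L,x}:=\chi_{\Lambda^\prime_{L-l_L}(x)}H^\omega_{\lambda,L}\chi_{\Lambda^\prime_{L-l_L}(x)}$ on the sub-tree $\Lambda^\prime_{L-l_L}(x)$ below $x$; decomposing its trace by distance to the boundary, $\Ex_\omega[\tr(E_{H^\omega_{\lambda,L,x}}(J_L))]=\sum_{j\ge0}K^{L-l_L-j}\Ex_\omega[\dprod{\delta_{v_j}}{E_{H^\omega_{\lambda,L,x}}(J_L)\delta_{v_j}}]$ with $v_j$ a vertex at distance $j$ from the boundary, the $j$-th term of $\Ex_\omega[\sum_x\eta^{\omega,\lambda}_{E_0,L,x}(\chi_I)]$ is by the Wegner estimate~\eqref{WegnerEq1} at most $(K+1)K^{L-j-1}C|J_L|\le C'K^{-j}|I|$, summable in $j$ uniformly in $L$; so it suffices to take the $L\to\infty$ limit of the $j$-th term for each fixed $j$.

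For fixed $j$ the vertex $v_j$ sits at bounded distance from the boundary and at distance $\sim(1-\alpha)L\to\infty$ from the top of $\Lambda^\prime_{L-l_L}(x)$, so by the resolvent identity together with the fractional-moment bound~\eqref{exp} one may replace $H^\omega_{\lambda,L,x}$ by $H^\omega_{\mathcal{C},\lambda}$ at the cost of an error of order $|\Lambda_L|^{2-s}e^{-\gamma(1-\alpha)L}$ --- the power of $|\Lambda_L|$ coming precisely from $\Im(E_0+|\Lambda_L|^{-1}z)=|\Lambda_L|^{-1}\Im z$ --- which tends to $0$ once $\lambda$, hence $\gamma$, is taken large enough; this is exactly the mechanism used in the proof of Lemma~\ref{infDivLem1}. (The role of the subsequence $L_n=m_0n$ is to keep the heights $L_n-l_{L_n}$ compatible with the block structure of the projections $P_y$, so that $H^\omega_{\lambda,L,x}$ really is a cut-off operator for $H^\omega_{\mathcal{C},\lambda}$.) Hence $\Ex_\omega[\dprod{\delta_{v_j}}{E_{H^\omega_{\lambda,L,x}}(J_L)\delta_{v_j}}]=\nu_j(J_L)+o(|\Lambda_L|^{-1})$, where $\nu_j(\cdot):=\Ex_\omega[\dprod{\delta_{x_j}}{E_{H^\omega_{\mathcal{C},\lambda}}(\cdot)\delta_{x_j}}]$ (with $\dist(\partial\mathcal{C},x_j)=j$) is absolutely continuous with density $\le\pi\norm{\rho}_\infty$ by the Wegner estimate, so that at a Lebesgue point $E_0$ of $\nu_j$ this equals $|\Lambda_L|^{-1}|I|(\nu_j'(E_0)+o(1))$. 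Collecting factors --- $(K+1)K^{l_L-1}K^{L-l_L-j}/|\Lambda_L|\to\frac{K-1}{K}K^{-j}$ --- and summing over $j$ against the dominating series, a comparison with the expansion~\eqref{dosDefEq1} of $n_{\mathcal{C},\lambda}$ identifies $\lim_n\Ex_\omega[\mu^{\omega,\lambda}_{E_0,L_n}(\chi_I)]$ with $Kn_{\mathcal{C},\lambda}(E_0)|I|$, which is~\eqref{limMeaEq1}.

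The second assertion is soft. For $k\ge1$ put $S_{L,k}:=\sum_{\dist(0,x)=l_L}\mathbb{P}[\eta^{\omega,\lambda}_{E_0,L,x}(\chi_I)=k]$. Since $\eta^{\omega,\lambda}_{E_0,L,x}(\chi_I)$ is a non-negative integer, $\mathbb{P}[\eta^{\omega,\lambda}_{E_0,L,x}(\chi_I)=k]\le\Ex_\omega[\eta^{\omega,\lambda}_{E_0,L,x}(\chi_I)]=\Ex_\omega[\tr(E_{H^\omega_{\lambda,L,x}}(J_L))]$; the Wegner estimate applies to $H^\omega_{\lambda,L,x}$ (the $\{\omega_y\}$ inside a sub-tree remain i.i.d.), and since the sub-trees $\Lambda^\prime_{L-l_L}(x)$ over $x$ with $\dist(0,x)=l_L$ partition $\Lambda_L\setminus\Lambda_{l_L-1}(0)$ we get $S_{L,k}\le\sum_{\dist(0,x)=l_L}\Ex_\omega[\tr(E_{H^\omega_{\lambda,L,x}}(J_L))]\le C|J_L||\Lambda_L|=C|I|$. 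Thus $0\le S_{L_n,k}\le C|I|$ for all $n$ and all $k$, and a diagonal extraction over $k\in\NN$ yields a subsequence $\{\tilde L_n\}$ of $\{L_n\}$ along which $\{S_{\tilde L_m,k}\}_m$ converges for every $k$; by Fatou the same bound gives $\sum_k k\,\lim_m S_{\tilde L_m,k}\le C|I|$, as needed for the later identification of the limiting law.

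The main difficulty is the one highlighted in the second step: the test interval $J_L$ shrinks at rate $|\Lambda_L|^{-1}$, so the bound $|\dprod{\delta_x}{(H-z)^{-1}\delta_y}|\le(\Im z)^{-1}$ that makes the Green's-function localization costless in the proof of Theorem~\ref{ThmDosCanopy} now carries a factor $|\Lambda_L|$, and beating it forces the cut to be made at distance proportional to $L$ and the large-disorder hypothesis to be used through the strong form of Theorem~\ref{LocResThm} (any decay rate $\gamma$ attainable); the bookkeeping is that of Lemma~\ref{infDivLem1}. A secondary technical point is the passage $\nu_j(E_0+|\Lambda_L|^{-1}I)=|\Lambda_L|^{-1}|I|(\nu_j'(E_0)+o(1))$, which rests on the absolute continuity of $n_{\mathcal{C},\lambda}$ (Theorem~\ref{ThmDosCanopy}) and of the $\nu_j$ (Wegner) together with Lebesgue differentiation.
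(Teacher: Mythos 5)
Your proof is correct and, despite the different packaging, runs on exactly the same three ingredients as the paper's: the Wegner estimate~\eqref{WegnerEq1} to produce a summable dominating series over the distance-to-boundary index, a resolvent comparison with $H^\omega_{\mathcal{C},\lambda}$ whose error (of size $|\Lambda_L|^{2-s}$ times an exponentially small factor) is beaten by taking $\lambda$, hence the decay rate $\gamma$ in Theorem~\ref{LocResThm}, large, and Lebesgue differentiation against the expansion~\eqref{dosDefEq1}. The one organizational difference is that you first pass to the sub-tree measures $\eta^{\omega,\lambda}_{E_0,L,x}$ and slice each $\Lambda^\prime_{L-l_L}(x)$ by distance to its boundary, whereas the paper slices $\Lambda_L$ directly, truncates the layer sum at depth $\lfloor\alpha L\rfloor$, and performs the resolvent surgery at $\Lambda_{m_0}(0)$; your route costs you an extension of Lemma~\ref{infDivLem1} from $C_c(\RR)$ to $\chi_I$, which your Wegner-controlled collar argument does supply (the paper uses the same extension silently when proving the second assertion). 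Your treatment of the second assertion coincides with the paper's, and your diagonal extraction over all $k$ is if anything slightly stronger than what is needed. One bookkeeping point you should reconcile: your collected prefactor $\frac{K-1}{K}K^{-j}$ is the correct one (there are $(K+1)K^{L-j-1}$ vertices at distance $j$ from $\partial\Lambda_L$), and summing it against~\eqref{dosDefEq1} yields $n_{\mathcal{C},\lambda}(E_0)|I|$, not $Kn_{\mathcal{C},\lambda}(E_0)|I|$; the extra factor $K$ you assert in your final sentence does not follow from your own arithmetic but is inherited from~\eqref{limMeaPfEq2}, where the layer cardinality is written as $(K+1)K^{L-n}$. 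This constant does not affect the structure of the argument or the qualitative content of Theorem~\ref{mainThm}, but you cannot simultaneously keep your (correct) normalization and the paper's stated limit.
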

\begin{proof}
Using the Wegner estimate (equation~\eqref{WegnerEq2}) on~\eqref{meaSeqEq1}, we have,
$$\Ex_\omega[\mu^{\omega,\lambda}_{E_0,L}(\chi_I)]\leq C|I|$$
for any bounded interval $I$. So, the measure associated to the linear
functional $\Ex_\omega[\mu^{\omega,\lambda}_{E_0,L}(\cdot)]$ is
absolutely continuous with bounded density. From definition of
$\mu^{\omega,\lambda}_{E_0,L}$, (assume $L$ is divisible by $m_0$)
\begin{align}\label{limMeaPfEq1}
\Ex_\omega[\mu^{\omega,\lambda}_{E_0,L}(\chi_I)]&=\Ex\left[\tr(E_{H^\omega_{\lambda,L}}(E_0+|\Lambda_L|^{-1}I))\right]\nonumber\\
&=\sum_{x\in\Lambda_L}\Ex\left[\dprod{\delta_x}{E_{H^\omega_{\lambda,L}}(E_0+|\Lambda_L|^{-1}I)\delta_x}\right]\nonumber\\
&=\sum_{n=0}^{L}\sum_{\dist(x,\partial\Lambda_L)=n}\Ex\left[\dprod{\delta_x}{E_{H^\omega_{\lambda,L}}(E_0+|\Lambda_L|^{-1}I)\delta_x}\right]\nonumber\\
&=\sum_{n=0}^{L-1}K^{L-n}\Ex\left[\dprod{\delta_{x_n}}{E_{H^\omega_{\lambda,L}}(E_0+|\Lambda_L|^{-1}I)\delta_{x_n}}\right]\nonumber\\
&\qquad+\Ex\left[\dprod{\delta_{0}}{E_{H^\omega_{\lambda,L}}(E_0+|\Lambda_L|^{-1}I)\delta_{0}}\right],
\end{align}
where $x_n\in\Lambda_L$ is a vertex such that $\dist(x_n,\partial\Lambda_L)=n$ (for any $n$, there are multiple such vertex). 
Using~\eqref{WegnerEq1} on~\eqref{limMeaPfEq1} for $n>L_0$ where $L_0=\lfloor \alpha L\rfloor$ for some $0<\alpha<1$.
\begin{align}\label{limMeaPfEq2}
\Ex_\omega[\mu^{\omega,\lambda}_{E_0,L}(\chi_I)]&=\sum_{n=0}^{L_0}(K+1)K^{L-n}\Ex\left[\dprod{\delta_{x_n}}{E_{H^\omega_{\lambda,L}}(E_0+|\Lambda_L|^{-1}I)\delta_{x_n}}\right]\nonumber\\
&\qquad+O\left(K^{-L_0}|I|\right).
\end{align}
Therefore, to obtain the limit~\eqref{limMeaEq1}, we only need to
compute the limit of the RHS above, as $ L \rightarrow \infty $. Using the denseness of $\Im(\cdot-z)^{-1}$ for $z\in\CC^{+}$ in $C_0(\RR)$, we only have to find the limit of 
\begin{align}\label{limMeaPfEq3}
(K-1)\sum_{n=0}^{L_0}K^{-n}\Ex\left[\Im\dprod{\delta_{x_n}}{(H^\omega_{\lambda,L}-E_0-|\Lambda_L|^{-1}z)^{-1}\delta_{x_n}}\right],
\end{align}
as $ L \rightarrow \infty $, since as $L\rightarrow\infty$, we can replace $\frac{(K+1)K^{L-n}}{|\Lambda_L|}$ by $(K-1)K^{-n}$. 
This can be done because, the expectation is bounded, which follows from the proof of Lemma~\ref{lemWegner}.

Now, we continue as in the proof of Theorem~\ref{ThmDosCanopy}. Define
$$\tilde{H}^\omega_{\lambda,L}=\chi_{\Lambda_{m_0}(0)}H^\omega_{\lambda,L}\chi_{\Lambda_{m_0}(0)}+\sum_{\dist(0,y)=m_0+1}\chi_{\Lambda_{L,y}}H^\omega_{\lambda,L}\chi_{\Lambda_{L,y}},$$
where
$$\Lambda_{L,y}=\{x\in\Lambda_L:y\prec x\}.$$
Using the resolvent equation, for any $x$ with $\dist(x,\partial\Lambda_L)\leq L_0$, between $H^\omega_{\lambda,L}$ and $\tilde{H}^\omega_{\lambda_L}$ (we will denote $z_L=E_0+|\Lambda_L|^{-1}z$),
\begin{align}\label{limMeaPfEq4}
&\dprod{\delta_x}{(H^\omega_{\lambda,L}-z_L)^{-1}\delta_x}=\dprod{\delta_x}{(H^\omega_{\lambda,L,A_x}-z_L)^{-1}\delta_x}-\nonumber\\
&\hspace{2in}\dprod{\delta_x}{(H^\omega_{\lambda,L}-z_L)^{-1}\delta_{PA_x}}\dprod{\delta_{A_x}}{(H^\omega_{\lambda,L,A_x}-z_L)^{-1}\delta_x},
\end{align}
where $A_x$ is the unique vertex satisfying $\dist(0,A_x)=m_0+1$ and $x\in\Lambda_{L,A_x}$,
 $H^\omega_{\lambda,L,y}=\chi_{\Lambda_{L,y}}H^\omega_{\lambda,L}\chi_{\Lambda_{L,y}}$,
 and $PA_x\in\Lambda_{m_0}(0)$ is such that $\dist(A_x,PA_x)=1$. Using~\eqref{limMeaPfEq4} on~\eqref{limMeaPfEq3} will give
\begin{align}\label{limMeaPfEq5}
&(K-1)\sum_{n=0}^{L_0}K^{-n}\Ex\left[\Im\dprod{\delta_{x_n}}{(H^\omega_{\lambda,L}-z_L)^{-1}\delta_{x_n}}\right]=\nonumber\\
&(K-1)\sum_{n=0}^{L_0} K^{-n}\Ex\left[\Im\dprod{\delta_{x_n}}{(H^\omega_{\lambda,L,A_{x_n}}-z_L)^{-1}\delta_{x_n}}\right]-\nonumber\\
&(K-1)\sum_{n=0}^{L_0} K^{-n}\Ex\left[\Im \left(\dprod{\delta_{x_n}}{(H^\omega_{\lambda,L}-z_L)^{-1}\delta_{PA_{x_n}}}\dprod{\delta_{A_{x_n}}}{(H^\omega_{\lambda,L,A_{x_n}}-z_L)^{-1}\delta_{x_n}}\right)\right].
\end{align}
Embedding the graph $\Lambda_{L,A_x}$ into $\mathcal{C}$ and using the resolvent identity, we get
\begin{align*}
&\dprod{\delta_x}{(H^\omega_{\lambda,L,A_x}-z_L)^{-1}\delta_x}=\\
&\quad\quad \dprod{\delta_x}{(H^\omega_{\mathcal{C},\lambda}-z_L)^{-1}\delta_x}+\dprod{\delta_x}{(H^\omega_{\lambda,L,A_x}-z_L)^{-1}\delta_{A_x}}\dprod{\delta_{PA_x}}{(H^\omega_{\mathcal{C},\lambda}-z_L)^{-1}\delta_x},
\end{align*}
which can be used on~\eqref{limMeaPfEq5} to give
\begin{align*}\label{limMeaPfEq6}
&(K-1)\sum_{n=0}^{L_0}K^{-n}\Ex\left[\Im\dprod{\delta_{x_n}}{(H^\omega_{\lambda,L}-z_L)^{-1}\delta_{x_n}}\right]=\nonumber\\
& (K-1)\sum_{n=0}^{L_0}K^{-n}\Ex\left[\Im\dprod{\delta_{x_n}}{(H^\omega_{\mathcal{C},\lambda}-z_L)^{-1}\delta_{x_n}}\right]+\nonumber\\
&(K-1)\sum_{n=0}^{L_0}K^{-n}\Ex\Bigl[\Bigr.\Im\Bigl(\bigr.\dprod{\delta_{x_n}}{(H^\omega_{\lambda,L,A_{x_n}}-z_L)^{-1}\delta_{A_{x_n}}}\times\\
&\qquad\qquad\qquad\qquad\qquad\qquad\qquad\qquad\qquad\dprod{\delta_{PA_{x_n}}}{(H^\omega_{\mathcal{C},\lambda}-z_L)^{-1}\delta_{x_n}}\Bigl.\Bigr)\Bigl.\Bigr]-\nonumber\\
&(K-1)\sum_{n=0}^{L_0} K^{-n}\Ex\left[\Im \left(\dprod{\delta_{x_n}}{(H^\omega_{\lambda,L}-z_L)^{-1}\delta_{PA_{x_n}}}\dprod{\delta_{A_{x_n}}}{(H^\omega_{\lambda,L,A_{x_n}}-z_L)^{-1}\delta_{x_n}}\right)\right].
\end{align*}
Using the exponential decay estimate~\eqref{expResEq1}, the second and
third terms give
\begin{align*}
&(K-1)\sum_{n=0}^{L_0}K^{-n}\Ex\left[\left|\dprod{\delta_{x_n}}{(H^\omega_{\lambda,L,A_{x_n}}-z_L)^{-1}\delta_{A_{x_n}}}\dprod{\delta_{PA_{x_n}}}{(H^\omega_{\mathcal{C},\lambda}-z_L)^{-1}\delta_{x_n}}\right|\right]+\\
&(K-1)\sum_{n=0}^{L_0} K^{-n}\Ex\left[\left|\dprod{\delta_{x_n}}{(H^\omega_{\lambda,L}-z_L)^{-1}\delta_{PA_{x_n}}}\dprod{\delta_{A_{x_n}}}{(H^\omega_{\lambda,L,A_{x_n}}-z_L)^{-1}\delta_{x_n}}\right|\right]\\
&\leq  (K-1)\sum_{n=0}^{L_0} K^{-n}2\left(\frac{\tilde{C}_{m_0,\mu}}{\lambda^s}\right)^{\frac{L-L_0}{m_0}}\frac{|\Lambda_L|^{2-s}}{(\Im z)^{2-s}}\\
&\leq O\left(e^{\frac{L}{m_0}(m_0(2-s)\ln K+(1-\alpha)(\ln \tilde{C}_{m_0,\mu}-s\ln \lambda))}\right).
\end{align*}
Hence, for 
$$\frac{m_0(2-s)\ln K +(1-\alpha)\ln \tilde{C}_{\mu,m_0}}{s(1-\alpha)}<\ln \lambda,$$
we get
$$\lim_{L\rightarrow\infty}\Ex[\mu^{\omega,\lambda}_{E_0,L}(\Im(\cdot-z)^{-1})]=\lim_{L\rightarrow\infty}(K-1)\sum_{n=0}^{\lfloor \alpha L\rfloor}K^{-n}\Ex[\Im\dprod{\delta_{x_n}}{(H^\omega_{\mathcal{C},\lambda}-z_L)^{-1}\delta_{x_n}}].$$
But the RHS converges to $Kn_{\mathcal{C},\lambda}(E_0)$, where
$n_{\mathcal{C},\lambda}(E_0)$  is the density of the measure
$n_{\mathcal{C},\lambda}$ (which is the density of state measure; see
Theorem~\ref{ThmDosCanopy} for the definition), at $E_0$.

For the second assertion of the lemma, using Lemmas~\ref{infDivLem1} and~\eqref{limMeaEq1}, we have
\[\lim_{n\rightarrow\infty} \sum_{\dist(0,x)=l_{nm_0}}\Ex[\eta^{\omega,\lambda}_{E_0,nm_0,x}(\chi_{I})]=\lim_{n\rightarrow\infty}\Ex[\mu^{\omega,\lambda}_{E_0,nm_0}(\chi_I)]=Kn_{\mathcal{C},\lambda}(E_0)|I|.\]
Using 
$$0\leq \sum_{\dist(0,x)=l_{L}}\mathbb{P}[\eta^{\omega,\lambda}_{E_0,L,x}(\chi_{I})=k]\leq \frac{1}{k}\sum_{\dist(0,x)=l_{L}}\Ex[\eta^{\omega,\lambda}_{E_0,L,x}(\chi_{I})],$$
we can find a subsequence $\{\tilde{L}_m\}_m$ of $\{nm_0\}_{n\in\NN}$, such that \[\biggl\{\sum_{\dist(0,x)=l_{L}}\mathbb{P}[\eta^{\omega,\lambda}_{E_0,L,x}(\chi_{I})=k]\biggr\}\] converges and the limit lies in the interval $[0,\frac{K}{k}n_{\mathcal{C},\lambda}(E_0)|I|]$.
\end{proof}

\subsection*{Proof of Theorem~\ref{mainThm}}
To prove the theorem all we need to do is to compute
$$\lim_{L\rightarrow\infty}\Ex_\omega\left[e^{\iota t\mu^{\omega,\lambda}_{E_0,L}(\chi_I)}\right],$$
where $\chi_I$ is the characteristic function of a bounded interval $I\subset \RR$. Notice that, for $\chi_I$, the random variables $\mu^{\omega,\lambda}_{E_0,L}(\chi_I)$ and $\{\eta^{\omega,\lambda}_{E_0,L,x}(\chi_I)\}_x$ are integer valued.

Using the Lemma~\ref{infDivLem1}, we have,
\begin{align}\label{chaEqua1}
\lim_{L\rightarrow\infty}\Ex_\omega\left[e^{\iota t\mu^{\omega,\lambda}_{E_0,L}(\chi_I)}\right]&=\lim_{L\rightarrow\infty}\Ex_\omega\left[e^{\iota t\sum_{\dist(0,x)=l_L}\eta^{\omega,\lambda}_{E_0,L,x}(\chi_I)}\right]\nonumber\\
&=\lim_{L\rightarrow\infty}\prod_{\dist(0,x)=l_L }\left(\Ex_\omega\left[e^{\iota t\eta^{\omega,\lambda}_{E_0,L,x}(\chi_I)}\right]\right)\nonumber\\
&=\lim_{L\rightarrow\infty}e^{\sum_{\dist(0,x)=l_L} \ln\left(\Ex_\omega\left[e^{\iota t\eta^{\omega,\lambda}_{E_0,L,x}(\chi_I)}-1\right]+1\right)}.
\end{align}
The second line follows because of the independence of
$\{\eta^{\omega,\lambda}_{E_0,L,x}\}_{\dist(0,x)=l_L}$ (this is where
$l_L$ is a multiple of $m_0$ is used so that all of the the
projections $P_{p_j}$ has support at atmost one $\{\Lambda^\prime_{L-l_L}(x)\}_{x}$). Using $|e^{\iota x}-1|\leq |x|$ for real $x$, we have,
\begin{equation}\label{mainPfeq1}
 \Bigl|\Ex_\omega\left[e^{\iota t\eta^{\omega,\lambda}_{E_0,L,x}(\chi_I)}-1\right]\Bigr|\leq \Ex_\omega\left[\left|e^{\iota t\eta^{\omega,\lambda}_{E_0,L,x}(\chi_I)}-1\right|\right]\leq |t|\Ex_\omega\left[\eta^{\omega,\lambda}_{E_0,L,x}(\chi_I)\right],
\end{equation}
and using the Wegner estimate~\eqref{WegnerEq2} over the operator $\chi_{\Lambda^\prime_{L-l_L}(x)}H^\omega_{\lambda,L}\chi_{\Lambda^\prime_{L-l_L}(x)}$, (the measure $\eta^{\omega,\lambda}_{E_0,L,x}$ is defined using this, see~\eqref{defInfDivMeaEq1}) we get,
\begin{equation}\label{mainPfeq2}
 \left|\Ex_\omega\left[e^{\iota t\eta^{\omega,\lambda}_{E_0,L,x}(\chi_I)}-1\right]\right|\leq |t|\Ex_\omega\left[\eta^{\omega,\lambda}_{E_0,L,x}(\chi_I)\right]\leq C |t||I|\frac{|\Lambda^\prime_{L-l_L}(x)|}{|\Lambda_L|} \xrightarrow{L\rightarrow\infty}0.
\end{equation}
From the expressions~\eqref{mainPfeq1},~\eqref{mainPfeq2}, and the fact that $|\ln(1+x)-x|\leq |x|^2$ for $|x|\ll 1$, we have
\begin{align}\label{mainPfeq3}
&\ln\left(\Ex_\omega\left[e^{\iota t\eta^{\omega,\lambda}_{E_0,L,x}(\chi_I)}-1\right]+1\right)\nonumber\\
&\qquad=\Ex_\omega\left[e^{\iota t\eta^{\omega,\lambda}_{E_0,L,x}(\chi_I)}-1\right]+O\left(\left|\Ex_\omega\left[e^{\iota t\eta^{\omega,\lambda}_{E_0,L,x}(\chi_I)}-1\right]\right|^2\right)\nonumber\\
&\qquad=\Ex_\omega\left[e^{\iota t\eta^{\omega,\lambda}_{E_0,L,x}(\chi_I)}-1\right]+O\left(|t|^2\left(\Ex_\omega\left[\eta^{\omega,\lambda}_{E_0,L,x}(\chi_I)\right]\right)^2\right)\nonumber\\
&\qquad=\Ex_\omega\left[e^{\iota t\eta^{\omega,\lambda}_{E_0,L,x}(\chi_I)}-1\right]+O\left(|t|^2|I|^2{\left(\frac{|\Lambda^\prime_{L-l_L}(x)|}{|\Lambda_L|}\right)}^2\right).
\end{align}
Using~\eqref{mainPfeq3} and the fact that $|\Lambda_{L-l_L}(0)|\left(\frac{|\Lambda^\prime_{L-l_L}(x)|}{|\Lambda_L|}\right)^2\xrightarrow{L\rightarrow\infty} 0$ on~\eqref{chaEqua1}, we get
\begin{align}\label{chaEqua2}
 \lim_{L\rightarrow\infty}\Ex_\omega\left[e^{\iota t\mu^{\omega,\lambda}_{E_0,L}(\chi_I)}\right]&=\lim_{L\rightarrow\infty}e^{\sum_{\dist(0,x)=l_L} \ln\left(\Ex_\omega\left[e^{\iota t\eta^{\omega,\lambda}_{E_0,L,x}(\chi_I)}-1\right]+1\right)}\nonumber\\
 &=\lim_{L\rightarrow\infty}e^{\sum_{\dist(0,x)=l_L}
   \Ex_\omega\left[e^{\iota
   t\eta^{\omega,\lambda}_{E_0,L,x}(\chi_I)}-1\right]}\ .
\end{align}
Focusing on the exponent of the last equation, we have
\begin{align}\label{mainPfeq4}
&\sum_{\dist(0,x)=l_L} \Ex_\omega\left[e^{\iota t\eta^{\omega,\lambda}_{E_0,L,x}(\chi_I)}-1\right]\nonumber\\
&\qquad=\sum_{\dist(0,x)=l_L} \sum_{k=1}^\infty (e^{\iota tk}-1)\mathbb{P}[\eta^{\omega,\lambda}_{E_0,L,x}(\chi_I)=k]\nonumber\\
&\qquad= \sum_{k=1}^{M_0} (e^{\iota tk}-1)\Bigl(\sum_{\dist(0,x)=l_L} \mathbb{P}[\eta^{\omega,\lambda}_{E_0,L,x}(\chi_I)=k]\Bigr)+R(L),
\end{align}
where 
\begin{align}\label{mainPfeq5}
R(L)&= \sum_{\dist(0,x)=l_L} \sum_{k=M_0+1}^{\infty} (e^{\iota tk}-1)\mathbb{P}[\eta^{\omega,\lambda}_{E_0,L,x}(\chi_I)=k]\nonumber\\
|R(L)|&\leq 2\sum_{\dist(0,x)=l_L} \sum_{k=M_0+1}^{\infty} \mathbb{P}[\eta^{\omega,\lambda}_{E_0,L,x}(\chi_I)=k]\nonumber\\
&\leq (\pi\norm{\rho}_\infty |I|)^2 |\Lambda_{L-l_L}(0)|\left(\frac{|\Lambda^\prime_{L-l_L}(x)|}{|\Lambda_L|}\right)^2\xrightarrow{L\rightarrow\infty}0.
\end{align}
The last line follows from the Minami estimate~\eqref{MinamiEq1}. From Lemma~\ref{limMeaLem}, we have a sequence
$\{L_n\}_{n\in\NN}$, such that, for $ k\leq M_0$
\begin{align}\label{mainPfeq6}
 &\lim_{n\rightarrow\infty}\sum_{\dist(0,x)=l_{L_n}} \mathbb{P}[\eta^{\omega,\lambda}_{E_0,L_n,x}(\chi_I)=k]=p_k(I).\\
\intertext{ Combining~\eqref{mainPfeq6},~\eqref{mainPfeq5}, and~\eqref{chaEqua2} gives}
&\lim_{n\rightarrow\infty}\Ex_\omega\left[e^{\iota
  t\mu^{\omega,\lambda}_{E_0,L}(\chi_I)}\right]=e^{\sum_{k=1}^{M_0}(e^{\iota
  t k}-1)p_k(I)}.
\end{align}
This completes the proof of the theorem.
\section*{Acknowledgement}
I thank my guide Prof. M Krishna for the guidance and support he is
giving me. I thank Dhriti Ranjan Dolai for being the understanding and
encouraging person he is, and Anish Mallick for all the numerous helps
he has done at each stage of this work; it would have been impossible to complete this work
with out him.

\bibliographystyle{plain}

\end{document}